\let\cl@chapter\relax \makeatother
\newcommand*{\I}{\imath}%
\crefname{cons}{constraint}{constraints}
\crefname{fig}{figure}{figures}
\crefname{claim}{claim}{claims}
\newtheorem{thm}{Theorem}
\newtheorem{lemma}[thm]{Lemma}
\newtheorem{proposition}[thm]{Proposition}
\newtheorem{claim}[thm]{Claim}
\newtheorem{corollary}[thm]{Corollary}
\theoremstyle{definition}
\newtheorem{remark}{Remark}
\begin{document}

\title[SBC for Nonconvex QCQP with Bounded Complex Variables]{A Spatial Branch-and-Cut Method for Nonconvex QCQP with Bounded Complex Variables}

\author{Chen Chen \and Alper Atamt\"urk \and Shmuel S. Oren}

%\funding[Grant $\#$ AFORS FA9550-10-1-0168 from the US Department of Defense.]
\affiliation{C. Chen}{Industrial Engineering and Operations Research, University of California, Berkeley, CA 94720-1777.\,\,\,email:\,\texttt{{chenchen@berkeley.edu}}}

\affiliation{A. Atamt\"{u}rk}{Industrial Engineering and Operations Research, University of California, Berkeley, CA 94720-1777.\,\,\,email:\,\texttt{{atamturk@berkeley.edu}}}

\affiliation{S. S. Oren}{Industrial Engineering and Operations Research, University of California, Berkeley, CA 94720-1777.\,\,\,email:\,\texttt{{oren@berkeley.edu}}}

\maketitle

\begin{abstract}
We develop a spatial branch-and-cut approach for nonconvex Quadratically Constrained Quadratic Programs with bounded complex variables (CQCQP). Linear valid inequalities are added at each node of the search tree to strengthen semidefinite programming relaxations of CQCQP. These valid inequalities are derived from the convex hull description of a nonconvex set of $2 \times 2$ positive semidefinite Hermitian matrices subject to a rank-one constraint.  We propose branching rules based on an alternative to the rank-one constraint that allows for local measurement of constraint violation.  Closed-form bound tightening procedures are used to reduce the domain of the problem.  We apply the algorithm to solve the Alternating Current Optimal Power Flow problem with complex variables as well as the Box-constrained Quadratic Programming problem with real variables.

\end{abstract}

\begin{center} August 2015; July 2016 \end{center}

\BCOLReport{15.04}

\section{Introduction}
\label{sec:intro}
The nonconvex quadratically-constrained quadratic programming problem with complex bounded variables (CQCQP) has numerous applications in signal processing \cite{waldspurger2015phase,de2011design,huang2014randomized} and control theory \cite{ben2003extended}, among others. Our main motivation for developing an algorithm for CQCQP is to solve power flow problems with alternating current \cite{lavaei_zero_2012,jabr2008opf}. We consider the following formulation of CQCQP: 

\begin{alignat*}{2}
\min   x^*Q_0x + \mbox{Re}(c_0^*x) + b_0 \ \ \ \ \ \  & \\
(\mathbf{CQCQP}) \ \ \ \text{s.t.} \ \   x^*Q_ix + \mbox{Re}(c_i^*x) + b_i\leq 0,  & \ i = 1,...,m\\
\ell \leq x \leq u&\\
x\in\mathbb{C}^n. \ \ & \\
\end{alignat*}
We denote the conjugate transpose operator by $^*$, and real components with $\mbox{Re($\cdot$)}$ and imaginary components with $\mbox{Im($\cdot$)}$. The decision vector $x\in \mathbb{C}^n$ has complex entries, and the remaining terms are data: Hermitian matrices $Q_i \in \mathbb{H}^{n\times n}$, real vector $b \in \mathbb{R}^n$, and complex vector $c_i \in \mathbb{C}^{n}$. Variable bounds $\ell \leq x \leq u$ are component-wise inequalities in the complex space --- we assume these to be finite. Note that the Hermitian assumption on $Q_i$ is without loss of generality since, otherwise, $Q_i$ may be replaced with $(Q_i+Q_i^*)/2$.

%Note that the Hermitian assumption on data is without loss of generality since any nonsymmetric A may be replaced with $(A+A^*)/2$:

%\[x^*Ax=(x^*Ax + x^*Ax)/2=(x^*Ax + x^*A^*x)/2= x^*(A+A^*)x/2\]

Quadratically-constrained quadratic programs on real variables (RQCQP) can be considered a special case of CQCQP where all imaginary components are restricted to 0. Likewise, CQCQP can be solved using RQCQP by defining separate real decision vectors to represent $\mbox{Re}(x)$ and $\mbox{Im}(x)$.  Despite this modeling equivalence, it is beneficial to exploit the structure of CQCQP to derive complex valid inequalities, bound tightening and spatial partitioning rules.
Stronger results can be obtained in the complex space compared to their real counterparts. In particular,
in \Cref{subsec:VI} we show that the new valid inequalities derived in the complex space can be interpreted as a complex analogue of the RLT inequalities (e.g. \cite{anstreicher2009semidefinite}), and that the RLT inequalities applied to the RQCQP reformulation of CQCQP are dominated by these complex valid inequalities.

Observing the advantages of working with the complex formulations, a growing body of work considers CQCQP as a problem distinct from RQCQP. For instance, Josz and Molzahn \cite{josz2015moment} show that moment-based semidefinite programming relaxations derived from CQCQP are not, in general, equivalent to those derived from a RQCQP transformation of the same problem and with computational experiments they demonstrate the power of retaining the CQCQP formulation. Stronger results are given regarding the SDP relaxation of the CQCQP associated with the S-lemma problem instead of the RQCQP equivalent \cite{huang2007complex,beck2006strong}.  Jiang et al. \cite{jiang2014approximation} demonstrate theoretically and empirically the benefits of working with CQCQP representation for generating strong relaxations. Kocuk et al. \cite{kocuk2014inexactness} demonstrate both theoretically and empirically for the optimal power flow problem that the spatial branch-and-bound approach can be improved by considering the complex formulation before converting it to a problem in the reals.

%A naive transformation to RQCQP would fail to account for this structure.

%Numerous general global optimization methods can applied to solve CQCQP (e.g. \cite{lasserre2002semidefinite,sherali1992global}).
In this paper we give a spatial branch-and-cut (SBC) approach to solve CQCQP.  For brevity, we assume familiarity with the general spatial branching framework; the reader is referred to Belotti et al. \cite{belotti2013mixed} for a thorough treatment on the subject.  There are several spatial branching algorithms for RQCQP (e.g. \cite{linderoth_simplicial_2005,misener2013glomiqo,raber1998simplicial,bao2009multiterm,phan_lagrangian_2010}); however, to the best of our knowledge this paper present the first spatial branching algorithm developed specifically for CQCQP.  Our SBC algorithm has three distinguishing features. First, we use a complex SDP formulation strengthened with valid inequalities. These inequalities are derived from the convex hull description of a nonconvex set of $2 \times 2$ positive semidefinite Hermitian matrices subject to a rank-one constraint that arises from a lifted formulation of CQCQP. Their relationship with the RLT inequalities are discussed at the end of \Cref{subsec:VI}. Second, we propose spatial partitioning on the entries of the relaxation's decision matrix, and we consider branching rules based on an alternative measure of constraint violation in lieu of the matrix rank constraint. This method does not rely on the CQCQP structure, and applies generally to any rank-constrained SDP formulation. Therefore, one can configure our algorithm to solve RQCQP using an SDP relaxation strengthened with RLT inequalities. Third, we develop bound tightening procedures based on closed-form solutions.

Computational experiments are conducted on the Alternating Current Optimal Power Flow (ACOPF) problem and the Box-constrained Quadratic Programming (BoxQP) problem. ACOPF is a generation dispatch problem that models alternating current (AC) using steady-state power flow equations, which are nonconvex quadratic constraints on complex variables relating power and voltage at buses and across transmission lines. ACOPF is commonly solved with heuristic iterative Newton-type solvers (e.g.  \cite{sun1984optimal}).  ACOPF may be modeled as a CQCQP problem, and there has been a recent interest in solving this formulation to optimality with SDP relaxations (e.g. \cite{bai_semidefinite_2008,lavaei_zero_2012}) and branch-and-bound methods (e.g. \cite{phan_lagrangian_2010,gopalakrishnan2012global,kocuk2014inexactness}) due to the potential for establishing global optimality.
We show with numerical experiments that the formulation exploiting the complex structure leads to significantly faster solution times compared to the application of the algorithm on RQCQP reformulations on ACOPF instances. Since the proposed branching rules also apply to RQCQP, we test them on BoxQP -- a well-studied nonconvex quadratic programming problem with nonhomogeneous quadratic objective and bounded real variables. BoxQP can be solved with a problem-specific finite branch-and-bound method \cite{burer2009globally}; therefore, it provides a useful benchmark for the general approach presented here.  We demonstrate that the proposed method can also be used as a viable RLT and SDP-based RQCQP solver.

The rest of the paper is organized as follows: \Cref{sec:spbb} details the spatial branch-and-cut algorithm with three major components: valid inequalities from the convex hull description of rank-one constrained relaxations, branching on matrix entries, and bound tightening procedures; \Cref{sec:experiments} contains the results from computational experiments with ACOPF and BoxQP problems; \Cref{sec:conclusion} concludes the paper.

%\note{expand}

\section{The Spatial Branch-and-Cut Method}
\label{sec:spbb}
The convex relaxation of CQCQP considered comes from the rank-one constraint of a standard SDP reformulation  \cite{lovasz1991cones,shor_quadratic_1987}, often called Shor's relaxation:
%\[\bold{(CSDP}): \min \langle Q_0X \rangle + \mbox{Re}(c_0^*x) + b_0 \]
%\\
%subject to
\begin{subequations}
\begin{alignat}{2}
 \min \ & \langle Q_0X \rangle + \mbox{Re}(c_0^*x) + b_0 \\
\bold{(CSDP}) \ \ \ \text{s.t.} \ &\langle Q_i,X \rangle + \mbox{Re}(c_i^*x) + b_i \leq 0, \quad i = 1,...,m \label{eq:liftlin}\\
&\ell \leq x \leq u \label{eq:vbounds}\\
&\left[\begin{array}{cc}
1 & x^*\\
x & X\end{array}\right]\succeq 0, \label{eq:PSD}
\end{alignat}
\end{subequations}
where $X \in \mathbb{H}^{n\times n}$ is a Hermitian submatrix of decision variables.
Imposing a rank-one constraint on the matrix
$Y:=\left[\begin{array}{cc}
1 & x^*\\
x & X\end{array}\right]$
gives an equivalent reformulation of CQCQP.  Let $y:= \left(\begin{array}{cc}
1 \\
x \end{array}\right)$ so that $Y=yy^*$. Furthermore, denote the real and imaginary components as $Y := W+\I T$ and $y:=w+\I t$. Since $Y$ is Hermitian we have $W_{ij}=W_{ji}, \ T_{ij}=-T_{ji}, \ \forall i,j$ and, thus, $\mbox{diag}(T)=0$.

This section is divided into three subsections.  First, we derive valid inequalities to strengthen the SDP relaxation.  Second, we propose a methodology for branching on the entries of matrix $Y$.  Third, we present closed-form bound-tightening techniques.

\subsection{Valid Inequalities}

\label{subsec:VI}
 Here we describe the convex hull of a nonconvex set that is applicable to any $(i,j)$ entry of $Y$, where $1 \leq i \neq j\leq n+1$. Let $\mathcal{J}_C$ be the set of feasible solutions in $\mathbb{C}^{2\times 2}$ to the following constraints:
\begin{subequations}
\begin{alignat}{2}
L_{ii} & \leq W_{ii} \leq U_{ii}, \label{eq:RSOC3a}\\
L_{jj} & \leq W_{jj} \leq U_{jj}, \label{eq:RSOC3b}\\
L_{ij} W_{ij} & \leq \, T_{ij} \, \leq  U_{ij}W_{ij}, \label{eq:RSOC4}\\
&W_{ij}\geq 0, \label{eq:RSOCnn}\\
W_{ii} W_{jj} &  =W_{ij}^2 + T_{ij}^2, \label{eq:RSOC1}
\end{alignat}
\end{subequations}
where $L$ and $U$ are real, symmetric bound matrices with $L \leq U$ and $L_{ii},L_{jj} \geq 0$.

One can always find such bound values so that $\mathcal{J}_C$ is valid for either CQCQP or an affine transformation of CQCQP. We will give a generic method to obtain such bounds at the end of this subsection.  First we describe the convex hull of $\mathcal{J}_C$.  The convex hull description will provide linear valid inequalities for $\mathcal{J}_C$, which can be applied to all $2\times 2$ principal minors of $Y$ in order to strengthen CSDP.

In order to describe the convex hull of $\mathcal{J}_C$ it is convenient to use the following sigmoid function:
\[f(x):= \left\{ \begin{array}{cc}
(\sqrt{1+x^2}-1)/x, & x \neq 0 \\
0, & x=0\\ \end{array}.\right.\]

\begin{remark}
\label{rem:fn}
$f(x)$ is increasing, strictly bounded above by $+1$ and strictly bounded below by $-1$.
\end{remark}

Consider the following linear inequalities:
\begin{subequations}
\begin{alignat}{2}
&\pi_0 + \pi_1 W_{ii} + \pi_2 W_{jj} + \pi_3 W_{ij} + \pi_4 T_{ij} \geq U_{jj}W_{ii}+U_{ii}W_{jj}-U_{ii}U_{jj}, \label{eq:vi1}\\
&\pi_0 + \pi_1 W_{ii} + \pi_2 W_{jj} + \pi_3 W_{ij} + \pi_4 T_{ij} \geq L_{jj}W_{ii}+L_{ii}W_{jj}-L_{ii}L_{jj}, \label{eq:vi2}
\end{alignat}
\end{subequations}

\noindent
where the coefficients $\pi$ are defined as
\begin{alignat*}{2}
&\pi_0 := -\sqrt{L_{ii}L_{jj}U_{ii}U_{jj}},\\
&\pi_1 := -\sqrt{L_{jj}U_{jj}},\\
&\pi_2 := -\sqrt{L_{ii}U_{ii}},\\
&\pi_3 := (\sqrt{L_{ii}}+\sqrt{U_{ii}})(\sqrt{L_{jj}}+\sqrt{U_{jj}})\frac{1-f(L_{ij})f(U_{ij})}{1+f(L_{ij})f(U_{ij})},\\
&\pi_4 := (\sqrt{L_{ii}}+\sqrt{U_{ii}})(\sqrt{L_{jj}}+\sqrt{U_{jj}})\frac{f(L_{ij})+f(U_{ij})}{1+f(L_{ij})f(U_{ij})}.
\end{alignat*}

\begin{lemma}
\label{lem:angles}
For  $\alpha_{ij} \in \{L_{ij},U_{ij}\}$ we have
\begin{equation}
1-f(L_{ij})f(U_{ij})+\alpha_{ij}(f(L_{ij})+f(U_{ij}))=(1+f(L_{ij})f(U_{ij}))\sqrt{1+\alpha_{ij}^2}.
\label{eq:vilemma}
\end{equation}
\end{lemma}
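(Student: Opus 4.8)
The plan is to reduce \eqref{eq:vilemma} to an elementary polynomial identity by first re-expressing $x$ and $\sqrt{1+x^2}$ in terms of $f(x)$ alone. Writing $f=f(x)$ and using the definition of the sigmoid, one has $xf=\sqrt{1+x^2}-1$, hence $\sqrt{1+x^2}=1+xf$; squaring the definition gives $f^2=(2+x^2-2\sqrt{1+x^2})/x^2$ for $x\neq 0$, and combining this with $\sqrt{1+x^2}=1+xf$ yields the key relation $f^2+2f/x=1$, i.e. $x(1-f^2)=2f$. Since \Cref{rem:fn} guarantees $|f(x)|<1$, we have $1-f^2>0$, so
\[
x=\frac{2f(x)}{1-f(x)^2},\qquad \sqrt{1+x^2}=1+xf(x)=\frac{1+f(x)^2}{1-f(x)^2},
\]
and one checks directly that both formulas also hold at $x=0$.

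Next I would substitute these expressions into \eqref{eq:vilemma}. Set $p:=f(L_{ij})$ and $q:=f(U_{ij})$, and first take $\alpha_{ij}=L_{ij}$, so $f(\alpha_{ij})=p$, $\alpha_{ij}=2p/(1-p^2)$, and $\sqrt{1+\alpha_{ij}^2}=(1+p^2)/(1-p^2)$. Multiplying the claimed identity through by the strictly positive quantity $1-p^2$, the left-hand side becomes $(1-pq)(1-p^2)+2p(p+q)$ and the right-hand side becomes $(1+pq)(1+p^2)$; expanding shows both equal $1+p^2+pq+p^3q$, which settles this case. The case $\alpha_{ij}=U_{ij}$ is identical after interchanging the roles of $p$ and $q$ (equivalently, the statement is symmetric under $L_{ij}\leftrightarrow U_{ij}$), so no separate computation is required.

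There is no genuinely hard step here; the only point needing care is the justification of the two displayed rational-function identities for $f$ — in particular invoking \Cref{rem:fn} to ensure $1-f(x)^2>0$, so that clearing denominators is legitimate and no case split on the sign of $1-f^2$ is needed — together with the trivial check at $x=0$. For intuition one may note the substitution $x=\tan\theta$ with $\theta\in(-\pi/2,\pi/2)$, under which $f(x)=\tan(\theta/2)$ and \eqref{eq:vilemma} becomes a disguised form of the tangent half-angle and double-angle formulas; but the direct algebraic route above avoids any appeal to trigonometry.
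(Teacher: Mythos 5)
Your proof is correct: the relations $x(1-f(x)^2)=2f(x)$ and $\sqrt{1+x^2}=(1+f(x)^2)/(1-f(x)^2)$ do follow from the definition of $f$ (with the $x=0$ check and the appeal to \Cref{rem:fn} for $1-f^2>0$ both in order), the polynomial identity $(1-pq)(1-p^2)+2p(p+q)=(1+pq)(1+p^2)$ holds, and the case $\alpha_{ij}=U_{ij}$ genuinely follows by the $L_{ij}\leftrightarrow U_{ij}$ symmetry of \eqref{eq:vilemma}. This is essentially the same direct algebraic verification as the paper's proof, which instead cancels terms in the difference of the two sides using $f(L_{ij})(1+\sqrt{1+L_{ij}^2})=L_{ij}$; the distinction is only one of bookkeeping (you re-parametrize by the $f$-values, the paper cancels in place).
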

\begin{proof}
If $\alpha_{ij}=0$, then equality~(\ref{eq:vilemma}) follows immediately.  Otherwise, suppose $\alpha_{ij}=L_{ij}\neq 0$.  Then we have

\begin{alignat*}{2}
&1-f(L_{ij})f(U_{ij})+L_{ij}(f(L_{ij})+f(U_{ij}))-(1+f(L_{ij})f(U_{ij}))\sqrt{1+L_{ij}^2}\\
=\ &-f(L_{ij})f(U_{ij})+L_{ij}f(U_{ij})-f(L_{ij})f(U_{ij})\sqrt{1+L_{ij}^2}\\
=\ &f(U_{ij})[L_{ij}-f(L_{ij})(1+\sqrt{1+L_{ij}^2})]\\
=\ &f(U_{ij})[L_{ij}-L_{ij}^2/L_{ij}]\\
=\ &0.
\end{alignat*}

\noindent
Therefore
\[1-f(L_{ij})f(U_{ij})+L_{ij}(f(L_{ij})+f(U_{ij}))=(1+f(L_{ij})f(U_{ij}))\sqrt{1+L_{ij}^2}.\]

\noindent
The case where $\alpha_{ij}=U_{ij}\neq 0$ follows by symmetry.
\end{proof}

\begin{proposition}
\label{lem:valid}
Inequalities~(\ref{eq:vi1}) and (\ref{eq:vi2}) are valid for $\mathcal{J}_C$.

\end{proposition}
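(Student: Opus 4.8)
The plan is to remove the nonlinear constraint \eqref{eq:RSOC1} by a polar change of coordinates, reduce both inequalities to a single quadratic inequality on a rectangle, and finish by checking the vertices of that rectangle. Given a point of $\mathcal{J}_C$, I will set $a:=\sqrt{W_{ii}}$ and $b:=\sqrt{W_{jj}}$; this is legitimate since $L_{ii},L_{jj}\ge 0$, and by \eqref{eq:RSOC3a}--\eqref{eq:RSOC3b} we then have $a\in[\sqrt{L_{ii}},\sqrt{U_{ii}}]$, $b\in[\sqrt{L_{jj}},\sqrt{U_{jj}}]$. By \eqref{eq:RSOC1} and \eqref{eq:RSOCnn}, $W_{ij}^2+T_{ij}^2=a^2b^2$ with $W_{ij}\ge 0$, so I may write $W_{ij}=ab\cos\theta$ and $T_{ij}=ab\sin\theta$ for some $\theta\in[-\tfrac{\pi}{2},\tfrac{\pi}{2}]$; when $ab>0$, constraint \eqref{eq:RSOC4} becomes $\tan\theta\in[L_{ij},U_{ij}]$, i.e. $\theta\in[\theta_L,\theta_U]:=[\arctan L_{ij},\arctan U_{ij}]\subset(-\tfrac{\pi}{2},\tfrac{\pi}{2})$, and when $ab=0$ all off-diagonal entries vanish and the angular restriction is vacuous.

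The core of the argument is to show $g(\theta):=\pi_3\cos\theta+\pi_4\sin\theta\ge C:=(\sqrt{L_{ii}}+\sqrt{U_{ii}})(\sqrt{L_{jj}}+\sqrt{U_{jj}})$ for every $\theta\in[\theta_L,\theta_U]$. First, $1+f(L_{ij})f(U_{ij})>0$ because $|f|<1$ by \Cref{rem:fn}, so $\pi_3,\pi_4$ are well defined and $\pi_3\ge 0$. Evaluating $g$ at the angle with tangent $\alpha\in\{L_{ij},U_{ij}\}$ gives $g=(\pi_3+\alpha\pi_4)/\sqrt{1+\alpha^2}$, which by \Cref{lem:angles} equals exactly $C$; hence $g(\theta_L)=g(\theta_U)=C$. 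Writing $g(\theta)=R\cos(\theta-\phi)$ with $R=\sqrt{\pi_3^2+\pi_4^2}$ and choosing $\phi\in[-\tfrac{\pi}{2},\tfrac{\pi}{2}]$ (possible since $\pi_3=R\cos\phi\ge 0$), the equality of $g$ at the two distinct endpoints forces $\phi=\tfrac12(\theta_L+\theta_U)$; consequently, for $\theta\in[\theta_L,\theta_U]$ we get $|\theta-\phi|\le\tfrac12(\theta_U-\theta_L)<\tfrac{\pi}{2}$, so $g(\theta)\ge R\cos\bigl(\tfrac12(\theta_U-\theta_L)\bigr)=g(\theta_L)=C$. (The subcases $R=0$, $\theta_L=\theta_U$, and $C=0$ are immediate; in the last one $a$ or $b$ vanishes and both inequalities collapse to $0\ge 0$.)

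Since $ab\ge 0$, this yields $\pi_3W_{ij}+\pi_4T_{ij}=ab\,g(\theta)\ge ab\,C$, so the left-hand side of each of \eqref{eq:vi1}, \eqref{eq:vi2} is at least $\pi_0+\pi_1a^2+\pi_2b^2+ab\,C$. Substituting the closed forms of $\pi_0,\pi_1,\pi_2$ and writing $p=\sqrt{L_{ii}}$, $q=\sqrt{U_{ii}}$, $s=\sqrt{L_{jj}}$, $r=\sqrt{U_{jj}}$, it remains to prove that $F(a,b):=r(r+s)a^2+q(p+q)b^2-(p+q)(r+s)ab-qr(qr-ps)\le 0$ on $[p,q]\times[s,r]$, which is \eqref{eq:vi1}, together with the analogous statement for the quadratic $G$ obtained from \eqref{eq:vi2}. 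Because $F$ has nonnegative leading coefficients $r(r+s)$ in $a$ and $q(p+q)$ in $b$, it is convex in each variable separately, hence attains its maximum over the rectangle at a vertex; a direct computation gives $F(q,r)=F(q,s)=F(p,r)=0$ and $F(p,s)=(p^2-q^2)(r^2-s^2)=(L_{ii}-U_{ii})(U_{jj}-L_{jj})\le 0$. The same reasoning applies to $G$, which vanishes at three vertices and equals $(U_{ii}-L_{ii})(L_{jj}-U_{jj})\le 0$ at the fourth. This completes the proof.

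I expect the only genuine obstacle to be the middle step: turning the coefficients $\pi_3,\pi_4$ — which are defined through the sigmoid $f$ — into the clean angular inequality $g\ge C$ on $[\theta_L,\theta_U]$. That is exactly the point where \Cref{lem:angles} (pinning $g$ to the value $C$ at the tangent angles of $L_{ij}$ and $U_{ij}$) together with the amplitude–phase form of $g$ does the work. The polynomial identities at the four vertices in the final step, and the degenerate boundary cases, are routine.
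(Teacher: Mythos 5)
Your proof is correct, and its skeleton matches the paper's: both split the claim into (i) a lower bound $\pi_3 W_{ij}+\pi_4 T_{ij}\ge (\sqrt{L_{ii}}+\sqrt{U_{ii}})(\sqrt{L_{jj}}+\sqrt{U_{jj}})\sqrt{W_{ii}W_{jj}}$ pinned at the extreme ratios $T_{ij}/W_{ij}\in\{L_{ij},U_{ij}\}$ by \Cref{lem:angles}, and (ii) a residual inequality in $W_{ii},W_{jj}$ alone. The differences are in execution. For (i), the paper parametrizes by $\alpha=T_{ij}/W_{ij}$ and uses concavity of $\pi_3+\alpha\pi_4-C\sqrt{1+\alpha^2}$ together with its vanishing at $\alpha\in\{L_{ij},U_{ij}\}$; you parametrize by the angle $\theta$ and use the amplitude--phase form $R\cos(\theta-\phi)$ with equal endpoint values --- the same unimodality argument in different coordinates, with the extra (correct) observations that $\pi_3\ge 0$ and $1+f(L_{ij})f(U_{ij})>0$ locating $\phi$ at the midpoint. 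For (ii), the paper applies the secant bound to $\sqrt{W_{kk}}$, multiplies the two resulting inequalities, and then appeals to the RLT lower bounds on the bilinear term $W_{ii}W_{jj}$; you instead fold everything into a single quadratic $F(a,b)$ on the rectangle $[\sqrt{L_{ii}},\sqrt{U_{ii}}]\times[\sqrt{L_{jj}},\sqrt{U_{jj}}]$, use separate convexity in $a$ and in $b$ to push the maximum to a vertex, and verify that $F$ vanishes at three vertices and is $(L_{ii}-U_{ii})(U_{jj}-L_{jj})\le 0$ at the fourth (I checked these identities; they hold, and the symmetric computation for \eqref{eq:vi2} follows by swapping $L$ and $U$). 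Your route is slightly more self-contained --- it does not invoke RLT validity --- at the cost of a direct polynomial verification; the paper's route makes the connection to RLT explicit, which it then exploits in the surrounding discussion. Your treatment of the degenerate cases ($ab=0$, $C=0$, $R=0$, $L_{ij}=U_{ij}$) is adequate.
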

\begin{proof}
\begin{subequations}
For any convex function $f$, the secant line connecting $(a,f(a))$ and $(b,f(b))$ lies above the graph of $f$.  Thus for any variable $q$ we have $\ell_q \leq q \leq u_q \implies (\ell_q+u_q)q - \ell_q u_q \geq q^2$.
Now for $k\in \{i,j\}$, since  $\sqrt{L_{kk}}\leq \sqrt{W_{kk}} \leq \sqrt{U_{kk}}$, applying this secant principle yields

\begin{equation}
(\sqrt{L_{kk}}+\sqrt{U_{kk}})\sqrt{W_{kk}}\geq\sqrt{L_{kk}U_{kk}}+W_{kk}.
\label{eq:secmat}
\end{equation}

\noindent
Multiplying inequalities from~(\ref{eq:secmat}) for $k=1$ and $k=2$ gives

\[\sqrt{W_{ii}W_{jj}}(\sqrt{L_{ii}}+\sqrt{U_{ii}})(\sqrt{L_{jj}}+\sqrt{U_{jj}})\geq (\sqrt{L_{ii}U_{ii}}+W_{ii})(\sqrt{L_{jj}U_{jj}}+W_{jj}).\]
Rearranging terms, we have

\begin{alignat}{2}
&\sqrt{W_{ii}W_{jj}}(\sqrt{L_{ii}}+\sqrt{U_{ii}})(\sqrt{L_{jj}}+\sqrt{U_{jj}})+\pi_0+\pi_1W_{ii}+\pi_2 W_{jj} \geq W_{ii}W_{jj}. \label{eq:ncvxvi}
\end{alignat}
The right-hand-sides of constraints (\ref{eq:vi1}) and (\ref{eq:vi2}) are RLT inequalities (discussed in the next subsection) and thus are valid for the bilinear term $W_{ii}W_{jj}$, which is the right-hand-side of the valid inequality~(\ref{eq:ncvxvi}).  It remains to show that the left-hand-side of~(\ref{eq:ncvxvi}) is overestimated by the left-hand sides of~(\ref{eq:vi1}) and (\ref{eq:vi2}), i.e.:

\begin{alignat}{2}
\pi_3W_{ij}+\pi_4T_{ij} \geq \sqrt{W_{ii}W_{jj}}(\sqrt{L_{ii}}+\sqrt{U_{ii}})(\sqrt{L_{jj}}+\sqrt{U_{jj}}). \label{eq:wtslhs}
\end{alignat}

Let $T_{ij}=\alpha W_{ij}$ for some $L_{ij}\leq\alpha_{ij}\leq U_{ij}$. Note that constraint~(\ref{eq:RSOCnn}) restricts $W_{ij}$ to be nonnegative, so from constraint~(\ref{eq:RSOC1}) we have:

\begin{equation}
\sqrt{W_{ii}W_{jj}} = \sqrt{1+\alpha_{ij}^2}W_{ij}.
\label{eq:alphrank}
\end{equation}
Substituting equality~(\ref{eq:alphrank}) into inequality~(\ref{eq:wtslhs}), we want to show
\begin{equation}
(\pi_3+\alpha_{ij} \pi_4)W_{ij}\geq \sqrt{1+\alpha_{ij}^2}(\sqrt{L_{ii}}+\sqrt{U_{ii}})(\sqrt{L_{jj}}+\sqrt{U_{jj}})W_{ij}
\end{equation}
for any $L_{ij}\leq \alpha_{ij} \leq U_{ij}$.  Replacing the coefficients with their definitions and simplifying, we get the following equivalent condition:

\begin{alignat}{2}
1-f(L_{ij})f(U_{ij})+\alpha_{ij}(f(L_{ij})+f(U_{ij}))-(1+f(L_{ij})f(U_{ij}))\sqrt{1+\alpha_{ij}^2}
\geq 0. \label{eq:wtsfinal}
\end{alignat}

To show that inequality~(\ref{eq:wtsfinal}) is valid, first observe that the second derivative of the left-hand side with respect to $\alpha_{ij}$ is $-(1+f(L_{ij})f(U_{ij}))/(1+\alpha^2)^{3/2}$.  Thus the left-hand side is concave w.r.t $\alpha$ (as noted in \Cref{rem:fn} we have that $|f(L_{ij})|,|f(U_{ij})|<1$).  Therefore, we only need to check that the left-hand side is nonnegative for $\alpha_{ij} \in \{L_{ij},U_{ij}\}$. From \Cref{lem:angles} we have that the left-hand side is exactly zero, and so inequality~(\ref{eq:wtsfinal}) and consequently inequalities~(\ref{eq:vi1}) and (\ref{eq:vi2}) are valid for $\mathcal{J}_C$.
\end{subequations}
\end{proof}

Let $\mathcal{J}_S$ be the set of feasible solutions to the natural SDP relaxation of $\mathcal{J}_C$:
\begin{subequations}
\begin{alignat}{2}
&L_{ii} \leq W_{ii} \leq U_{ii}, \label{eq:SSOC3a}\\
&L_{jj} \leq W_{jj} \leq U_{jj}, \label{eq:SSOC3b}\\
&L_{ij} W_{ij} \leq T_{ij} \leq  U_{ij}W_{ij} \label{eq:SSOC4}\\
&W_{ij}\geq 0, \label{eq:SSOC5}\\
&W_{ii}W_{jj} \geq W_{ij}^2 + T_{ij}^2. \label{eq:SSOC1}
\end{alignat}
\end{subequations}

\begin{remark}
\label{rem:psdsoc}
For $n=2$, $W + \I T \succeq 0$ is equivalent to the principal minor constraints: $W_{ii} \geq 0,W_{jj} \geq 0, W_{ii}W_{jj} \geq W_{ij}^2 + T_{ij}^2$.  Since by construction $L_{ii},L_{jj} \geq 0$, then constraint~(\ref{eq:SSOC1}) is equivalent to the positive semidefinite constraint for $\mathcal{J}_S$.
\end{remark}

Let $\mathcal{J}_V$ be the set of $X$ satisfying inequalities~(\ref{eq:vi1})-(\ref{eq:vi2}).  We shall prove that the convex hull of $\mathcal{J}_C$ can be obtained by adding the valid inequalities~(\ref{eq:vi1})-(\ref{eq:vi2}) to the SDP relaxation.

\begin{proposition}
\textnormal{conv}$(\mathcal{J}_C)= \mathcal{J}_S \cap \mathcal{J}_V$.
\label{prop:convhullj}
\end{proposition}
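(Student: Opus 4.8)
I would first dispose of the inclusion $\mathrm{conv}(\mathcal{J}_C)\subseteq\mathcal{J}_S\cap\mathcal{J}_V$, which is routine. By \Cref{rem:psdsoc}, on the region $\{W_{ii},W_{jj}\ge 0\}$ --- which is forced by $L_{ii},L_{jj}\ge 0$ together with \eqref{eq:SSOC3a}--\eqref{eq:SSOC3b} --- constraint \eqref{eq:SSOC1} is exactly positive-semidefiniteness of the $2\times2$ Hermitian matrix $W+\I T$, so $\mathcal{J}_S$ is convex; $\mathcal{J}_V$ is a polyhedron, hence convex. Since $\mathcal{J}_C\subseteq\mathcal{J}_S$ (we merely relaxed the equality \eqref{eq:RSOC1} to \eqref{eq:SSOC1}) and $\mathcal{J}_C\subseteq\mathcal{J}_V$ by \Cref{lem:valid}, the convex set $\mathcal{J}_S\cap\mathcal{J}_V$ contains $\mathcal{J}_C$, hence its convex hull.

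For the reverse inclusion the plan is to argue through extreme points. The set $K:=\mathcal{J}_S\cap\mathcal{J}_V$ is closed and bounded ($W_{ii},W_{jj}$ are boxed, and then $W_{ij}^2+T_{ij}^2\le W_{ii}W_{jj}\le U_{ii}U_{jj}$), hence compact, so it is the convex hull of its extreme points and it suffices to show each extreme point $p$ lies in $\mathcal{J}_C$. If \eqref{eq:SSOC1} is tight at $p$, then $p$ satisfies every defining relation of $\mathcal{J}_C$ and we are done. Otherwise \eqref{eq:SSOC1} is strict; since it is the only non-affine constraint cutting out $K$, a relatively open neighborhood of $p$ in the polyhedron $P$ defined by the affine constraints \eqref{eq:SSOC3a}--\eqref{eq:SSOC4}, \eqref{eq:SSOC5}, \eqref{eq:vi1}, \eqref{eq:vi2} lies in $K$, so $p$ must be a vertex of $P$. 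The task then reduces to showing that every vertex of $P$ either violates \eqref{eq:SSOC1} --- contradicting $p\in\mathcal{J}_S$ --- or satisfies it with equality --- contradicting strictness; hence no such $p$ exists.

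To prove this, I would enumerate the possible active bases at a vertex. At such a $p$ one has $W_{ii}W_{jj}>W_{ij}^2+T_{ij}^2\ge 0$, so $W_{ii},W_{jj}>0$; thus $U_{ii},U_{jj}>0$ and, since $|f(L_{ij})|,|f(U_{ij})|<1$ by \Cref{rem:fn}, $\pi_3\ne 0$. A vertex of $P\subseteq\mathbb{R}^4$ needs four linearly independent active constraints, drawn from: at most one bound on $W_{ii}$; at most one on $W_{jj}$; at most one side of \eqref{eq:SSOC4} (writing $T_{ij}=\alpha_{ij}W_{ij}$ with $\alpha_{ij}\in\{L_{ij},U_{ij}\}$ when active); $W_{ij}=0$; and \eqref{eq:vi1}/\eqref{eq:vi2}. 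If $W_{ij}=0$, then \eqref{eq:SSOC4} forces $T_{ij}=0$ and the remaining active constraints pin $(W_{ii},W_{jj})$ at a corner of its box; substituting the corner values into \eqref{eq:vi1}/\eqref{eq:vi2} and simplifying with the closed forms of the $\pi$'s shows the left side strictly exceeds the right whenever $L_{ii},L_{jj}>0$ (so $p\notin P$), while if $L_{ii}L_{jj}=0$ the corner already lies in $\mathcal{J}_C$. If $W_{ij}>0$, a dimension count forces at least one of $W_{ii},W_{jj}$ to a bound; then, combining the tight inequalities, evaluating $\pi_3+\alpha_{ij}\pi_4$ at the endpoints via \Cref{lem:angles}, and using the observation that subtracting a tight \eqref{eq:vi1} from a tight \eqref{eq:vi2} yields $(U_{jj}-L_{jj})W_{ii}+(U_{ii}-L_{ii})W_{jj}=U_{ii}U_{jj}-L_{ii}L_{jj}$ (which severely constrains the vertex), one checks that the active constraints force $\sqrt{W_{ii}W_{jj}}=W_{ij}\sqrt{1+\alpha_{ij}^2}$, i.e.\ \eqref{eq:SSOC1} holds with equality.

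The main obstacle is this vertex bookkeeping: each combinatorial type of active set requires a short but genuine computation with the explicit $\pi_0,\dots,\pi_4$, and the degenerate configurations (some $L_{ii},L_{jj}$ or $U_{ii},U_{jj}$ zero, or $L_{ij}=U_{ij}$) must be peeled off and checked separately, since then $\mathcal{J}_C$ itself contains boundary points --- e.g.\ with $W_{ij}=0$ --- that the generic argument does not see. A fallback that sidesteps extreme points is a direct decomposition: for $p\in K$ with $W_{ij}>0$, set $\alpha=T_{ij}/W_{ij}\in[L_{ij},U_{ij}]$ and try to write $p$ as a convex combination of two points of $\mathcal{J}_C$ of the form $(a^{\pm},b^{\pm},c^{\pm},\alpha c^{\pm})$ with $a^{\pm}b^{\pm}=(c^{\pm})^2(1+\alpha^2)$, where \eqref{eq:SSOC1} supplies the ``radial'' slack and \eqref{eq:vi1}--\eqref{eq:vi2} the ``chordal'' slack needed to keep the mixing weights in $[0,1]$; a separate construction using $\mathcal{J}_C$-points at the extreme angles $\alpha\in\{L_{ij},U_{ij}\}$ handles the case $W_{ij}^2+T_{ij}^2<L_{ii}L_{jj}$, in which the fixed-angle hyperbolic arc inside the box is empty.
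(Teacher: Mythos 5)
Your proposal takes essentially the same route as the paper's proof: validity (\Cref{lem:valid}) gives one inclusion, and the other is reduced, via compactness, to showing that every extreme point of $\mathcal{J}_S \cap \mathcal{J}_V$ at which \eqref{eq:SSOC1} is slack must be a vertex of the linear system, followed by an active-set case analysis resting on \Cref{lem:angles} and a separate treatment of $W_{ij}=0$ --- exactly the paper's Claim~\ref{claim:wclaim} and Cases 1--3, which carry out your ``vertex bookkeeping'' by exhibiting the explicit matrices $X^A,X^B,X^C$. Two small points to fix when you write it out: in the $W_{ij}=0$ corner analysis the conclusion $p\notin P$ requires the right-hand side of \eqref{eq:vi1}/\eqref{eq:vi2} to strictly exceed the left (not the reverse, as written), and the four active constraints there need not all come from the box bounds --- \eqref{eq:vi1}/\eqref{eq:vi2} may be among them, which is why the paper disposes of all $W_{ij}=0$ points at once in Claim~\ref{claim:wclaim} rather than corner by corner.
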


\begin{proof}
From \Cref{lem:valid} we have that $\mbox{conv}(\mathcal{J}_C)\subseteq \mathcal{J}_S \cap \mathcal{J}_V$.  From constraints~(\ref{eq:RSOC3a})-(\ref{eq:RSOC3b}) and (\ref{eq:RSOC1}) we can observe that $\mathcal{J}_S \cap \mathcal{J}_V$ is bounded.  Thus to prove that $\mathcal{J}_S \cap \mathcal{J}_V \subseteq  \textnormal{conv}(\mathcal{J}_C)$, it is sufficient to ensure that all the extreme points of $\mathcal{J}_S \cap \mathcal{J}_V$ are in $\mathcal{J}_C$.  First, let us invoke the following claim:
\begin{claim}
\label{claim:wclaim}
If $W_{ij}=0$, then either $X \notin \mathcal{J}_S \cap \mathcal{J}_V$ or $X \in \textnormal{conv}(\mathcal{J}_C)$.
\end{claim}

\begin{proof}
By way of contradiction, suppose there exists $\bar{X} \in \mathcal{J}_S \cap \mathcal{J}_V \setminus \mbox{conv}(\mathcal{J}_C)$ such that $\bar W_{ij}=0$.  From constraint~(\ref{eq:RSOC4}) we have that $\bar T_{ij}=0$.  Consequently, if $\bar W_{ii}=0$ or $\bar W_{jj}=0$, then constraint~(\ref{eq:RSOC1}) is satisfied, so either  $\bar{X} \notin \mathcal{J}_S \cap \mathcal{J}_V$ or $\bar X \in \mbox{conv}(\mathcal{J}_C)$.

%\pagebreak

From \Cref{lem:valid} we only need to consider $\bar W_{ii} \bar W_{jj} > 0$, which
$\bar W_{ii} \bar W_{jj} > 0$ implies that the right-hand side of constraint~(\ref{eq:vi2}) is nonnegative since $0\leq L_{kk} \leq \bar W_{kk}, k\in \{i,j\}$, and so $L_{jj} \bar W_{ii} +L_{ii} \bar W_{jj} - L_{ii}L_{jj} \geq L_{ii} \bar W_{jj} \geq 0$.  On the left-hand side, all terms are nonpositive, so we require all terms to be zero in order to ensure $\bar{X}\in \mathcal{J}_S \cap \mathcal{J}_V$.  Since we have that $\bar W_{ii} \bar W_{jj} > 0$, then $\pi_1\bar W_{ii}=\pi_2 \bar W_{jj}=0$ only if $L_{ii}=L_{jj}=0$.

Now let us define the following two matrices:

\[U^1 := \left[\begin{array}{cc} U_{ii} & 0 \\ 0 & 0\end{array}\right],
U^2 := \left[\begin{array}{cc} 0 & 0 \\ 0 & U_{jj}\end{array}\right].
\]
\noindent
Since $L_{ii}=L_{jj}=0$, then $U^1,U^2 \in \mathcal{J}_C$, and since $\bar W_{ij} = \bar T_{ij} = 0$. Checking constraint~(\ref{eq:vi1}) we have  $0 \geq U_{jj}W_{ii}+U_{ii}W_{jj}-U_{ii}U_{jj}$, so $\bar X \in  \mathcal{J}_S \cap \mathcal{J}_V$ provided $W_{ii} \leq U_{ii},W_{jj}\leq U_{jj}$.  Observe that $\bar X$ can be expressed as the convex combination of $U^1,U^2,$ and the zeros matrix.  $L_{ii}=L_{jj}=0$ implies the zeros matrix is a member of $\mathcal{J}_C$, which in turn implies $\bar X \in \mbox{conv}(\mathcal{J}_C)$, which contradicts our initial assumption.
\end{proof}

Now we will prove that $\mbox{ext}(\mathcal{J}_S \cap \mathcal{J}_V) \in \mathcal{J}_C$.  Observe that if the constraint~(\ref{eq:SSOC1}) is binding at an extreme point of $\mathcal{J}_S \cap \mathcal{J}_V$, then it is a member of $\mathcal{J}_C$.  Moreover, by Claim~\eqref{claim:wclaim}, if a point $\bar X$ with $\bar W_{ij}=0$ is in $\mbox{ext}(\mathcal{J}_S \cap \mathcal{J}_V)$, then $\bar X \in \mbox{conv}(\mathcal{J}_C)$. It follows that $\bar X$ is a member of $\mathcal{J}_C$ since $\mathcal{J}_S \cap \mathcal{J}_V \supseteq \mbox{conv}(\mathcal{J}_C) \supseteq \mathcal{J}_C$.  Therefore, we shall check by cases for any extreme point where constraints~(\ref{eq:SSOC1}) and (\ref{eq:SSOC5}) are not binding.
\\ \\
\noindent \emph{Case 1}: Constraints~(\ref{eq:vi1}) and (\ref{eq:vi2}) are not binding: \\
If constraint~(\ref{eq:SSOC1}) is not binding, then we require at least four linearly independent linear constraints to be binding.  To obtain four such constraints, we require the two variable bounds (\ref{eq:RSOC3a}) and (\ref{eq:RSOC3b}) to be binding.  Moreover, we require that constraint~(\ref{eq:RSOC4}) is binding on both sides with $L_{ij}\neq U_{ij}$, which implies $W_{ij}=T_{ij}=0$. By Claim~\eqref{claim:wclaim}, this point can be disregarded.
\\ \\
\noindent \emph{Case 2}: Constraints~(\ref{eq:vi1}) and (\ref{eq:vi2}) are both binding: \\
Since constraints~(\ref{eq:vi1}) and (\ref{eq:vi2}) share the same coefficients $\pi_3,\pi_4$ for $W_{ij},T_{ij}$, then for an extreme point we require that constraint~(\ref{eq:RSOC4}) is binding on at least one side. Due to Claim~\eqref{claim:wclaim} we need only consider $W_{ij} \neq 0$, so constraint~(\ref{eq:RSOC4}) can count for at most one linearly independent constraint; thus, let $T_{ij}=\alpha_{ij}W_{ij}$, where $\alpha_{ij} \in \{L_{ij},U_{ij}\}$. This gives at most three linearly independent constraints, (\ref{eq:RSOC4}), (\ref{eq:vi1}) and (\ref{eq:vi2}), so at least one of the variable bounds~(\ref{eq:RSOC3a}) and (\ref{eq:RSOC3b}) must be binding.  Define $\alpha_{kk}$ so that $W_{kk}=(U_{kk}-L_{kk})\alpha_{kk}+L_{kk}$ for $k \in \{i,j\}$. Since at least one of $W_{ii},W_{jj}$ is at a variable bound, then $\alpha_{kk}\in\{0,1\}$ for either $k=i$ or $k=j$. Moreover, the right-hand sides of constraints (\ref{eq:vi1}) and (\ref{eq:vi2}) must be equal since the left-hand sides are the same and both constraints are binding in this case. Therefore, we can write:
\begin{alignat*}{2}
&U_{jj}W_{ii}+U_{ii}W_{jj}-U_{ii}U_{jj}=L_{jj}W_{ii}+L_{ii}W_{jj}-L_{ii}L_{jj}\\
\iff&(\alpha_{ii}+\alpha_{jj})[U_{ii}U_{jj}-L_{ii}U_{jj}-L_{jj}U_{ii}+L_{ii}L_{jj}]\\&=U_{ii}U_{jj}-L_{ii}U_{jj}-L_{jj}U_{ii}+L_{ii}L_{jj}\\
\iff& \alpha_{ii}+\alpha_{jj} = 1.
\end{alignat*}

Thus we have either $\alpha_{ii}=1$, in which case $W_{ii}=U_{ii},W_{jj}=L_{jj}$ or $\alpha_{jj}=1$, in which case $W_{ii}=L_{ii},W_{jj}=U_{jj}$.  First suppose $W_{ii}=U_{ii},W_{jj}=L_{jj}$.  Define the following matrix:

\[X^A := \left[\begin{array}{cc} U_{ii} & \sqrt{\frac{U_{ii}L_{jj}}{1+\alpha^2_{ij}}}\\  \sqrt{\frac{U_{ii}L_{jj}}{1+\alpha^2_{ij}}} & L_{jj}\end{array}\right] + \I \left[\begin{array}{cc} 0 & \alpha_{ij}\sqrt{\frac{U_{ii}L_{jj}}{1+\alpha^2_{ij}}}\\  -\alpha_{ij}\sqrt{\frac{U_{ii}L_{jj}}{1+\alpha^2_{ij}}} & 0 \end{array}\right],
\]
and as usual, denote the components as $X^A:= W^A + \I T^A$. By construction, we have that $(W_{ij}^A)^2+(T_{ij}^A)^2=W_{ii}^AW_{jj}^A$, so $X^A \in \mathcal{J}_C$.  We want to show that, with $X^A$, constraints~(\ref{eq:RSOC3a})-(\ref{eq:RSOC4}), (\ref{eq:vi1}), and (\ref{eq:vi2}) are binding.  Constraints~(\ref{eq:RSOC3a})-(\ref{eq:RSOC4}) can be confirmed by observation.  Recall that constraints~(\ref{eq:vi1}) and (\ref{eq:vi2}) share the same right-hand side:
\[U_{jj}W^A_{ii}+U_{ii}W^A_{jj}-U_{ii}U_{jj}=U_{ii}L_{jj}.\]

The left-hand side of constraint~(\ref{eq:vi1}) or (\ref{eq:vi2}) is:
\begin{alignat*}{2}
&\pi_0 + \pi_1 W^A_{ii} + \pi_2 W^A_{jj} + \pi_3 W^A_{ij} + \pi_4 T^A_{ij} \\
=&\pi_0 + \pi_1 W^A_{ii} + \pi_2 W^A_{jj} + (\pi_3+\alpha_{ij}\pi_4) W^A_{ij} \\
=&-\sqrt{L_{ii}L_{jj}U_{ii}U_{jj}}-\sqrt{L_{jj}U_{jj}}U_{ii}-\sqrt{L_{ii}U_{ii}}L_{jj} \\
&+(\sqrt{L_{ii}}+\sqrt{U_{ii}})(\sqrt{L_{jj}}+\sqrt{U_{jj}})\frac{1-f(L_{ij})f(U_{ij})+\alpha_{ij}(f(L_{ij})+f(U_{ij}))}{1+f(L_{ij})f(U_{ij})}
  \sqrt{\frac{U_{ii}L_{jj}}{1+\alpha^2_{ij}}}\\
=& \sqrt{U_{ii}L_{jj}} \bigg ( -\sqrt{L_{ii}U_{jj}}-\sqrt{U_{ii}U_{jj}}-\sqrt{L_{ii}L_{jj}} \\
& + (\sqrt{L_{ii}}+\sqrt{U_{ii}})(\sqrt{L_{jj}}+\sqrt{U_{jj}})\frac{1-f(L_{ij})f(U_{ij})+\alpha_{ij}(f(L_{ij})+f(U_{ij}))}{(1+f(L_{ij})f(U_{ij})) \sqrt{1+\alpha^2_{ij}}} \bigg )\\
= &U_{ii}L_{jj}.
\end{alignat*}

The last equality follows from \Cref{lem:angles}. The argument for the case $W_{ii}=L_{ii},W_{jj}=U_{jj}$ follows by symmetry.
\\ \\
\noindent\emph{Case 3}: Exactly one of the constraints~(\ref{eq:vi1}) and (\ref{eq:vi2}) is binding: \\
From Claim~\eqref{claim:wclaim} we need only consider $W_{ij} \neq 0$, so constraint~(\ref{eq:RSOC4}) can count for at most one linearly independent constraint; thus, let $T_{ij}=\alpha_{ij}W_{ij}$, where $\alpha_{ij} \in \{L_{ij},U_{ij}\}$.  This gives us at most two linearly independent linear constraints: (\ref{eq:RSOC4}) and either (\ref{eq:vi1}) or (\ref{eq:vi2}).  Thus both variable bounds~(\ref{eq:RSOC3a}) and (\ref{eq:RSOC3b}) must be binding.  In Case 2 we have already considered the possibilities that $W_{ii}=U_{ii},W_{jj}=L_{jj}$ or $W_{ii}=U_{ii},W_{jj}=L_{jj}$.  Suppose, then, that $W_{ii}=U_{ii},W_{jj}=U_{jj}$ and define the corresponding matrix:
\[
X^B := \left[\begin{array}{cc} U_{ii} & \sqrt{\frac{U_{ii}U_{jj}}{1+\alpha^2_{ij}}}\\  \sqrt{\frac{U_{ii}U_{jj}}{1+\alpha^2_{ij}}} & U_{jj}\end{array}\right] + \I \left[\begin{array}{cc} 0 & \alpha_{ij}\sqrt{\frac{U_{ii}U_{jj}}{1+\alpha^2_{ij}}}\\  -\alpha_{ij}\sqrt{\frac{U_{ii}U_{jj}}{1+\alpha^2_{ij}}} & 0 \end{array}\right],
\]
\noindent
and as usual, denote the components as $X^B:= W^B + \I T^B$. By construction, we have that $(W_{ij}^B)^2+(T_{ij}^B)^2=(W_{ii}^BW_{jj}^B)$, so $X^B \in \mathcal{J}_C$.  We want to show that, with $X^B$, constraints~(\ref{eq:RSOC3a})-(\ref{eq:RSOC4}), and (\ref{eq:vi1}) are binding.  This can be done as in Case 2, by invoking \Cref{lem:angles}:
\begin{alignat*}{2}
&\pi_0 + \pi_1 W^B_{ii} + \pi_2 W^B_{jj} + \pi_3 W^B_{ij} + \pi_4 T^B_{ij} \\
=&\pi_0 + \pi_1 W^B_{ii} + \pi_2 W^B_{jj} + (\pi_3+\alpha_{ij}\pi_4) W^B_{ij} \\
=&-\sqrt{L_{ii}L_{jj}U_{ii}U_{jj}}-\sqrt{L_{jj}U_{jj}}U_{ii}-\sqrt{L_{ii}U_{ii}}U_{jj} \\
&+(\sqrt{L_{ii}}+\sqrt{U_{ii}})(\sqrt{L_{jj}}+\sqrt{U_{jj}})\frac{1-f(L_{ij})f(U_{ij})+\alpha_{ij}(f(L_{ij})+f(U_{ij}))}{1+f(L_{ij})f(U_{ij})}  \sqrt{\frac{U_{ii}U_{jj}}{1+\alpha^2_{ij}}}\\
=&\sqrt{U_{ii}U_{jj}} \bigg ( -\sqrt{L_{ii}U_{jj}}-\sqrt{U_{ii}L_{jj}}-\sqrt{L_{ii}L_{jj}}\\
&+ (\sqrt{L_{ii}}+\sqrt{U_{ii}})(\sqrt{L_{jj}}+\sqrt{U_{jj}})\frac{1-f(L_{ij})f(U_{ij})+\alpha_{ij}(f(L_{ij})+f(U_{ij}))}{(1+f(L_{ij})f(U_{ij})) \sqrt{1+\alpha^2_{ij}}} \bigg )\\
=& U_{ii}U_{jj}\\
=& U_{jj}W^B_{ii}+U_{ii}W^B_{jj}-U_{ii}U_{jj}.
\end{alignat*}
Finally, suppose that  $W_{ii}=L_{ii},W_{jj}=L_{jj}$ and define the corresponding matrix:
\[X^C := \left[\begin{array}{cc} L_{ii} & \sqrt{\frac{L_{ii}L_{jj}}{1+\alpha^2_{ij}}}\\  \sqrt{\frac{L_{ii}L_{jj}}{1+\alpha^2_{ij}}} & L_{jj}\end{array}\right] + \I \left[\begin{array}{cc} 0 & \alpha_{ij}\sqrt{\frac{L_{ii}L_{jj}}{1+\alpha^2_{ij}}}\\  -\alpha_{ij}\sqrt{\frac{L_{ii}L_{jj}}{1+\alpha^2_{ij}}} & 0 \end{array}\right].
\]
By the same argument as used with $X^B$, which we omit to avoid repetition, $X^C$ belongs to $\mathcal{J}_C$ where constraints ~(\ref{eq:RSOC3a})-(\ref{eq:RSOC4}), and (\ref{eq:vi2}) are binding.  Thus, in all cases every extreme point belongs to $\mathcal{J}_C$. \end{proof}

\subsubsection*{Numerical Example of $\mathcal{J}_C$}
Consider the following instance of $\mathcal{J}_C$ with $i=1,j=2$:
\begin{subequations}
\label{eq:excall}
\begin{alignat}{2}
&0 \leq W_{11} \leq 1,\label{eq:exc2}\\
&1 \leq W_{22} \leq 4,\label{eq:exc3}\\
&0 \leq T_{12} \leq \frac{4}{3}W_{12}, \label{eq:exc4}\\
&W_{12}\geq 0, \label{eq:exc5}\\
&W_{11}W_{22}=W_{12}^2+T_{12}^2.\label{eq:exc1}\\
\end{alignat}
\end{subequations}

The coefficients $\pi$ of the valid inequalities~(\ref{eq:vi1})-(\ref{eq:vi2}) are
\begin{alignat*}{2}
&f(L_{12})=0,f(U_{12})=0.5,\\
&\pi_0 = 0, \pi_1 = -2, \pi_2 = 0, \pi_3 = 3,\pi_4 = 1.5.
\end{alignat*}

Valid inequality~(\ref{eq:vi1}) is
\[ -6W_{11}-W_{22}+3W_{12}+1.5T_{12}+4 \geq 0, \]
and valid inequality~(\ref{eq:vi2}) is
\[ -3W_{11}+3W_{12}+1.5T_{12} \geq 0. \]

\subsubsection*{The Real Case: $\mathcal{J}_R$}
We now consider the special case of the real variables. Let $\mathcal{J}_R$ be the set of symmetric matrices $W$ that satisfy the following constraints:
\begin{subequations}
\begin{alignat}{2}
&L_{ii} \leq W_{ii} \leq U_{ii},\label{eq:JR1}\\
&L_{jj} \leq W_{jj} \leq U_{jj},\label{eq:JR2}\\
& W_{ij} \geq 0,\label{eq:JR3}\\
&W_{ii}W_{jj}=W_{ij}^2 \label{eq:JR4}.
\end{alignat}
\end{subequations}

$\mathcal{J}_R$ can be seen as a special case of  $\mathcal{J}_C$ where $L_{ij}=U_{ij}=0$, and, therefore, we have the following corollary.

\begin{corollary}
\label{cor:realvi}
The convex hull of $\mathcal{J}_R$ can be described with the following constraints:
\end{corollary}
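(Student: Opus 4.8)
The plan is to deduce the corollary directly from \Cref{prop:convhullj} by specializing the bound data to $L_{ij}=U_{ij}=0$, which is the choice that turns $\mathcal{J}_C$ into $\mathcal{J}_R$. First I would substitute $f(L_{ij})=f(U_{ij})=f(0)=0$ (using \Cref{rem:fn}) into the definitions of the $\pi$ coefficients: this collapses them to $\pi_4=0$ and $\pi_3=(\sqrt{L_{ii}}+\sqrt{U_{ii}})(\sqrt{L_{jj}}+\sqrt{U_{jj}})$, with $\pi_0,\pi_1,\pi_2$ unchanged, so that \eqref{eq:vi1}--\eqref{eq:vi2} lose their $T_{ij}$ term and become precisely the two linear inequalities asserted in the corollary. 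At the same time, constraint \eqref{eq:RSOC4} becomes $0\le T_{ij}\le 0$, so $T_{ij}=0$ (hence $T=0$, since $\operatorname{diag}(T)=0$ and $T_{ji}=-T_{ij}$), and \eqref{eq:RSOC1} reduces to $W_{ii}W_{jj}=W_{ij}^2$. Identifying a real symmetric matrix $W$ with the Hermitian matrix having zero imaginary part, this shows that $\mathcal{J}_C$ with these bounds is exactly $\mathcal{J}_R$, and moreover that $\mathcal{J}_C$ lives entirely in the hyperplane $\{T_{ij}=0\}$.

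Next I would apply \Cref{prop:convhullj}, which gives $\textnormal{conv}(\mathcal{J}_C)=\mathcal{J}_S\cap\mathcal{J}_V$, and specialize the right-hand side in the same way. In $\mathcal{J}_S$, constraint \eqref{eq:SSOC4} again forces $T_{ij}=0$ and \eqref{eq:SSOC1} becomes $W_{ii}W_{jj}\ge W_{ij}^2$, so $\mathcal{J}_S$ reduces (after dropping the pinned coordinate $T_{ij}$) to \eqref{eq:JR1}--\eqref{eq:JR3} together with $W_{ii}W_{jj}\ge W_{ij}^2$; and $\mathcal{J}_V$ reduces to the two linear inequalities identified above. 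Since $\mathcal{J}_C\cong\mathcal{J}_R$ and $\textnormal{conv}(\mathcal{J}_C)$ sits inside $\{T_{ij}=0\}$, taking convex hulls commutes with restricting to that hyperplane, so projecting out the identically-zero $T_{ij}$ coordinate yields $\textnormal{conv}(\mathcal{J}_R)$ equal to the listed constraint set, which is exactly the claim.

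The only step needing a little care is this final bookkeeping: verifying that passing between the real picture, where $T_{ij}$ is simply absent, and the complex picture of \Cref{prop:convhullj}, where $T_{ij}$ is a coordinate pinned to $0$, does not alter the hull; everything else is the mechanical substitution $f(0)=0$ into an already-proven statement. As a self-contained alternative, one could instead re-run the argument in the real setting: validity of the two inequalities is the $\alpha_{ij}\equiv 0$ instance of \Cref{lem:valid} (the secant chain \eqref{eq:secmat}--\eqref{eq:ncvxvi} is untouched, and \eqref{eq:wtsfinal} holds trivially), and the reverse inclusion is the extreme-point case analysis of \Cref{prop:convhullj}, which simplifies because $T_{ij}=0$ and $\pi_4=0$ eliminate several cases. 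Either route closes the proof, and I expect no genuine obstacle.
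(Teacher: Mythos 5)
Your proposal is correct and takes essentially the same route as the paper, which also obtains the corollary as the special case $L_{ij}=U_{ij}=0$ of \Cref{prop:convhullj}, observing that constraint~(\ref{eq:RSOC4}) then forces $T_{ij}=0$ and that dropping this coordinate yields the stated system. Your explicit check that $f(0)=0$ collapses $\pi_4$ to zero and $\pi_3$ to $(\sqrt{L_{ii}}+\sqrt{U_{ii}})(\sqrt{L_{jj}}+\sqrt{U_{jj}})$, and your remark that taking convex hulls commutes with restriction to the hyperplane $\{T_{ij}=0\}$, merely spell out details the paper leaves implicit.
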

\begin{subequations}
\begin{alignat}{2}
& L_{ii} \leq W_{ii} \leq U_{ii}\\
& L_{jj} \leq W_{jj} \leq U_{jj}\\
& (\sqrt{L_{ii}}+\sqrt{U_{ii}})(\sqrt{L_{jj}}+\sqrt{U_{jj}})W_{ij}\geq (U_{jj}+\sqrt{L_{jj}U_{jj}})W_{ii}\nonumber \\
&+(U_{ii}+\sqrt{L_{ii}U_{ii}})W_{jj}+\sqrt{L_{ii}L_{jj}U_{ii}U_{jj}}-U_{ii}U_{jj}\label{eq:realvi1}\\
& (\sqrt{L_{ii}}+\sqrt{U_{ii}})(\sqrt{L_{jj}}+\sqrt{U_{jj}})W_{ij}\geq (L_{jj}+\sqrt{L_{jj}U_{jj}})W_{ii}\nonumber \\
&+(L_{ii}+\sqrt{L_{ii}U_{ii}})W_{jj}+\sqrt{L_{ii}L_{jj}U_{ii}U_{jj}}-L_{ii}L_{jj}\label{eq:realvi2}\\
& W \succeq 0
\end{alignat}
\end{subequations}
\begin{proof}
This is a special case of \Cref{prop:convhullj} with $L_{ij}=U_{ij}=0$, which due to constraint~(\ref{eq:RSOC4}) is equivalent to setting $T_{ij}=0$, resulting in a matrix with only real entries.  Dropping $T_{ij}$ gives the desired result.
\end{proof}

\subsubsection*{Numerical Example of $\mathcal{J}_R$}
We shall use the numerical example of $\mathcal{J}_C$ but setting $L_{12}=U_{12}=0$ and dropping $T_{12}$ accordingly. For the real case, valid inequality~(\ref{eq:vi1}) is
\[ -6W_{11}-W_{22}+3W_{12}+4 \geq 0, \]
and valid inequality~(\ref{eq:vi2}) is
\[ -3W_{11}+3W_{12} \geq 0. \]

The inequalities for the real case are shown in Figure~\ref{fig:viex}.  The feasible region of $\mathcal{J}_R$ is depicted in the upper-left quadrant. Black spheres are centered around the intersection points of variable bounds on the cone. The upper-right quadrant depicts the intersection of $W_{12}^2=W_{11}W_{22}$ with the variable bounds at $W_{11}=1$ and $W_{22} = 1$.  The lower-right quadrant depicts the cone with valid inequality~(\ref{eq:vi1}). The intersection is ellipsoidal and three of the highlighted points lie on the boundary of the valid inequality ($\{W_{11}=0, W_{22}=4, W_{12} = 0,\},\{W_{11}=1, W_{22}=1, W_{12} = 1\},\{W_{11}=1, W_{22}=4, W_{12} = 2\}$).  The lower-left quadrant depicts the intersection of the cone with valid inequality~(\ref{eq:vi2}).  The intersection is a hyperbola, and three highlighted points lie on the boundary of the valid inequality ($\{W_{11}=0, W_{22}=1, W_{12} = 0,\},\{W_{11}=0, W_{22}=4, W_{12} = 0\},\{W_{11}=1, W_{22}=1, W_{12} = 1\}$).

\begin{figure}[h!]
  \centering
    \includegraphics[width=0.8\textwidth]{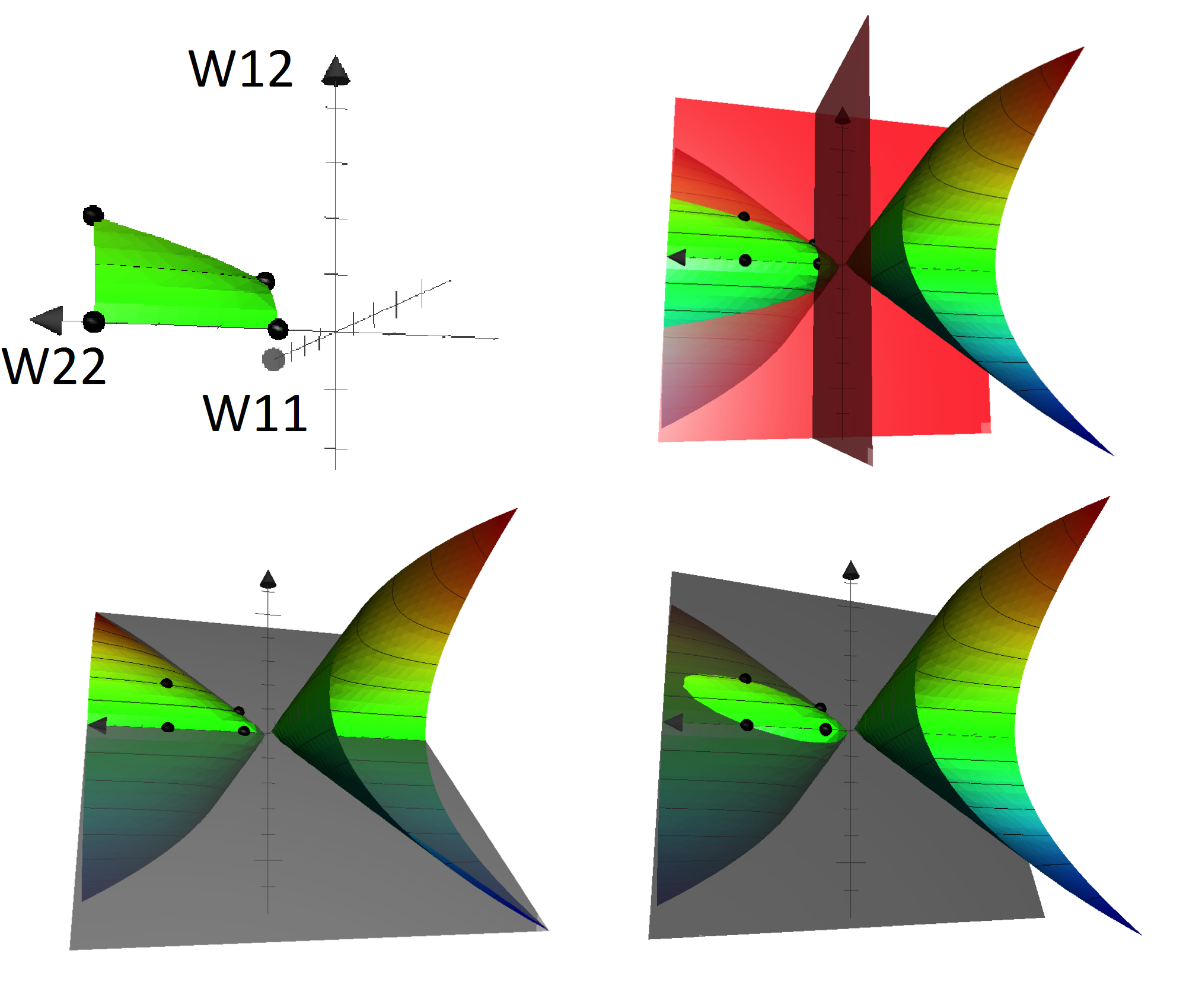}
      \caption{Clockwise, starting from top-left: $\mathcal{J}_R$; cone of~(\ref{eq:exc1}), upper bound of~(\ref{eq:exc2}), and lower bound of~(\ref{eq:exc3}); cone of~(\ref{eq:exc1}) and valid inequality~(\ref{eq:vi1}); cone of~(\ref{eq:exc1}) and valid inequality~(\ref{eq:vi2}).}
\label{fig:viex}
\end{figure}

\subsubsection*{Comparison with RLT Inequalities}
Consider the RLT inequalities:
\begin{subequations}
\begin{alignat}{2}
& W_{ij} \leq u_jw_i + \ell_i w_j - \ell_iu_j, \label{eq:RLT1}\\
\bold{(RLT)} \ \ \  & W_{ij} \leq \ell_jw_i + u_iw_j - \ell_ju_i,\label{eq:RLT2}\\
& W_{ij} \geq \ell_jw_i + \ell_iw_j - \ell_i\ell_j, \label{eq:RLT3}\\
& W_{ij} \geq u_jw_i + u_iw_j - u_iu_j. \label{eq:RLT4}
\end{alignat}
\end{subequations}

Note that for RLT we allow the possibility that $i=j$.  Inequalities \eqref{eq:RLT1}--\eqref{eq:RLT4} are derived from the RLT procedure of Sherali and Adams \cite{sherali1992global}. They are also known as McCormick estimators as they can be derived from earlier work on convex envelopes by McCormick \cite{mccormick1976computability}. For $w \in \mathbb{R}^2$, Anstreicher and Burer \cite{anstreicher2010computable} prove that the convex hull of the real bounded set $\{(W,w): \ W=ww^T, \ell \leq w \leq u\}$ is given by the RLT inequalities on $W_{ii},W_{jj}$ and $W_{ij}$ together with the SDP constraint $W \succeq ww^T$.  As discussed, valid bounds for $\mathcal{J}_C$ can be derived from the complex  bounded set $l \leq x \leq u$ for $x \in \mathbb{C}^2$.  We will do a thorough comparison at the end of this section, but let us first proceed to analyzing the convex hull of $\mathcal{J}_C$.

$\mathcal{J}_R$ is implied by the following constraints:

\begin{subequations}
\begin{alignat}{2}
&\sqrt{L_{ii}}\leq w_i \leq \sqrt{U_{ii}},\label{eq:rbox1}\\
&\sqrt{L_{jj}}\leq w_j \leq \sqrt{U_{jj}},\label{eq:rbox2}\\
& W = ww^T.\label{eq:rbox3}
\end{alignat}
\end{subequations}

From Burer and Anstreicher \cite{anstreicher2010computable} we know the convex hull of constraints~({\ref{eq:rbox1})-({\ref{eq:rbox3}) can be described by the RLT inequalities on  $W_{ii},W_{jj}$ and $W_{ij}$ together with the convex constraint $W \succeq ww^T$.  Constraints~({\ref{eq:rbox1})-({\ref{eq:rbox3}) can be rewritten as:

\begin{subequations}
\begin{alignat}{2}
&L_{ii} \leq W_{ii} \leq U_{ii},\\
&L_{jj} \leq W_{jj} \leq U_{jj},\\
& W =ww^T.
\end{alignat}
\end{subequations}

Finally, $W=ww^T$ for a vector of free variables $w$ holds iff $W$ is positive semidefinite and of rank one, which gives equivalence in the space of $W$ between $\mathcal{J}_R$ and the set of feasible solutions to constraints~({\ref{eq:rbox1})-({\ref{eq:rbox3}).  Hence the RLT inequalities on $W_{ii},W_{jj},W_{ij}$ dominate valid inequalities~(\ref{eq:vi1}) and (\ref{eq:vi2}) in the special case of the reals as they describe the convex hull of the same set in the lifted space that includes $w$ variables.

We note that the inequalities coincide on diagonal entries of $W$.  The RLT inequalities for diagonal matrix entries are

\begin{subequations}
\begin{alignat}{2}
& W_{ii} \leq u_iw_i + \ell_i w_i - \ell_iu_i,\label{eq:RLTd1}\\
& W_{ii} \geq 2\ell_iw_i - \ell_i^2, \label{eq:RLTd2}\\
& W_{ii} \geq 2u_iw_i - u_i^2. \label{eq:RLTd3}
\end{alignat}
\end{subequations}

For $i>1$ ($i=1$ merely gives $W_{11}=1$) inequality~(\ref{eq:RLTd1}) can also be obtained by applying either (\ref{eq:realvi1}) or (\ref{eq:realvi2}) to $\left[\begin{array}{cc}
1 & w_i\\
w_i & W_{ii}\end{array}\right]$, which is the $2\times 2$ principal submatrix of $Y$ with indices $1$ and $i$.  The remaining inequalities~(\ref{eq:RLTd2})-(\ref{eq:RLTd3}) are implied by the SDP constraint since $(w_i-l\ell_i)^2\geq 0,(w_i-u_i)^2\geq 0 \implies w_i^2 \geq \max \{2\ell_iw_i - \ell_i^2, 2u_iw_i - u_i^2 \}$ and $W_{ii} \geq w_i^2$.

In the complex case $\mathcal{J}_C$, we can show that domination runs the other direction: a natural application of RLT does not capture the convex hull of $\mathcal{J}_C$. One possible transformation to RQCQP involves a matrix of the form
\[ \left(\begin{array}{cc}
1\\ w \\ t \end{array}\right)\left(\begin{array}{cc}
1\\ w \\ t \end{array}\right)' ,\]
where the components of the complex vector $x:= w + \I t$ are treated as separate decision variables. In this case, the RLT inequalities together with the SDP constraint~(\ref{eq:SSOC1}) do not give the convex hull of $\mathcal{J}_C$ as shown in the example below.

We continue with the numerical example of $\mathcal{J}_C$, and we will show that there is a point satisfying the positive semidefinite condition and the RLT inequalities, but is outside $\mbox{conv}(\mathcal{J}_C)$.  Thus an application of RLT inequalities to the RQCQP reformulation is not sufficient to describe $\mbox{conv}(\mathcal{J}_C)$.  Let us add the complex variables $x_1,x_2$ with the following bounds on magnitude:
\begin{alignat*}{2}
&0 \leq |x_{1}| \leq 1,\\
&1 \leq |x_{2}| \leq 2.\\
\end{alignat*}

Considering the real and imaginary components of $x$ as separate decision variables, let us transform the problem into RQCQP.  Moreover, we require bounds on $\mbox{Re}(x),\mbox{Im}(x)$.  We shall use the magnitude-implied bounds:

\begin{alignat*}{2}
& |w_1| \leq 1, \ |t_1| \leq 1,\\
& |w_2| \leq 2, \ |t_2| \leq 2.
\end{alignat*}

Now $W_{12} = \mbox{Re}(x_1x_2^*) = w_1w_2+t_1t_2$, and $T_{12} = \mbox{Im}(x_1x_2^*) = t_1w_2-w_1t_2$.  Applying RLT inequalities to each bilinear term, we obtain the following inequalities:
\begin{subequations}
\label{eq:RLTex}
\begin{alignat}{2}
\label{eq:real-rlt-first} &W_{12} \leq -|2w_1-w_2|-|2t_1-t_2|+4\\
&W_{12} \geq |2w_1+w_2|+|2t_1+t_2|-4\\
&T_{12} \leq -|2t_1-w_2|-|2w_1+t_2|+4\\
&T_{12} \geq |2t_1+w_2|+ |2w_1-t_2|-4\\
& W_{11} \leq 2,\\
& W_{11} \geq 2|w_1|+2|t_1|-2,\\
& W_{22} \leq 8,\\
\label{eq:real-rlt-last} & W_{22} \geq 4|w_1|+4|t_1|-8.
\end{alignat}
\end{subequations}

RLT inequalities~(\ref{eq:real-rlt-first})--(\ref{eq:real-rlt-last}) admit the solution $x=0,T_{12}=0,W_{12}=0,W_{11}=1,W_{22}=4$, which satisfies constraints~(\ref{eq:exc2})-(\ref{eq:exc4}) and the positive semidefinite constraint. However, this solution violates valid inequality~(\ref{eq:vi2}), where $W_{12}=T_{12}=0$ implying $W_{11}=0$.

\subsubsection*{Constructing Bound Matrices $L$ and $U$}
We provide a procedure to derive valid bounds so that $\mathcal{J}_C$ is applicable to any CQCQP.  For certain problems the procedure may be unnecessary: values might be obtained from the problem directly or via bound tightening procedures. For instance, in ACOPF, constraints~(\ref{eq:RSOC3a}) and (\ref{eq:RSOC3b}) are given as voltage magnitude bounds and constraint~(\ref{eq:RSOC4}) models nodal phase angle difference bounds.

\subsubsection*{Constraints~(\ref{eq:RSOC3a}) and (\ref{eq:RSOC3b})}
$W_{ii}= |x_{i-1}|$ for $i \geq 2$, representing constraints on CQCQP variable magnitudes, and likewise for $j$.   As magnitudes are nonnegative, a valid lower bound can always be obtained by setting $L_{ii} = L_{jj} = 0$.  Furthermore, upper bounds can be obtained directly from the original variable bounds~(\ref{eq:vbounds}).  For $W_{11}$ we can set $L_{11}=U_{11}=1$.

\subsubsection*{Constraints~(\ref{eq:RSOC4}) and (\ref{eq:RSOCnn}}
A sufficient condition to derive valid bounds is that either $w$ or $t$ (or both) have strictly positive entries. This can be done using an affine shift.  Observe that for any $\ell$ we have:
\[x^*Qx + c^*x + b = (x-\ell)^*Q(x-\ell) + (c^*+2\ell^*Q)(x-\ell) + b +\ell^*Q\ell+c^*\ell.\]

Therefore with substitution of variables $q := x-\ell+e+\I e$, where $e$ is the ones vector, any (bounded) complex QCQP may be rewritten with a decision vector with only positive components.

Since $Y=yy^*$ in the lifted formulation of CQCQP we have $W_{ij}=w_iw_j+t_it_j$ and $T_{ij}=t_iw_j-w_it_j$. Thus we can assume $0< W^-\leq W_{ij}$, where $W^- := w^L_{i}w^L_{j}+t^L_{i}t^L_{j}$.  From $\mbox{rank}(Y)=1$ we have that $W_{ij}^2+T_{ij}^2=W_{ii}W_{jj}$, and so:
\begin{alignat*}{2}
\sqrt{\frac{U_{ii}U_{jj}}{(W^-)^2}-1}&\geq&\sqrt{\frac{W_{ii}W_{jj}}{W_{ij}^2}-1} = \frac{\left|T_{ij}\right|}{W_{ij}}.
\end{alignat*}
Hence we have valid bounds for constraint~(\ref{eq:RSOC4}): $-L_{ij}=U_{ij} = \sqrt{U_{ii}U_{jj}/(W^-)^2-1}$.  Moreover, by construction $W_{ij} \geq 0$ which implies constraint~(\ref{eq:RSOCnn}).

Note that constraints describing $\mathcal{J}_C$ shifted back to the original variables will include the original variables of CQCQP.  For instance, suppose we set $q=x-l+e+\I e$. Moreover, define the components $q := w^q + \I t^q$.  Then we have
\begin{alignat*}{2}
W^q_{ij}&= (w_i-w^L_{i}+1)(w_j-w^L_j+1)+(t_i-t^L_{i}+1)(t_j-t^L_j+1)\\
&=w_iw_j+t_it_j+(1-w_j^L)w_i+(1-w_i^L)w_j+(1-w_i^L)(1-w_j^L)\\
&+(1-t_j^L)t_i+(1-_i^L)t_j+(1-t_i^L)(1-t_j^L)\\
&= W_{ij} + (1-w_j^L)w_i+(1-w_i^L)w_j+(1-t_j^L)t_i+(1-_i^L)t_j\\
&+(1-w_i^L)(1-w_j^L)+(1-t_i^L)(1-t_j^L).
\end{alignat*}

Thus a valid inequality derived for $W^q_{ij}$ gives a valid inequality for $W_{ij},x_i,x_j$.

In the special case of real QCQP, we can instead set $L_{ij} = U_{ij} = 0$, which enforces the matrix of imaginary entries $T$ to be the zeros matrix. An affine transformation to a problem with nonnegative variables is sufficient to ensure that constraint~(\ref{eq:RSOCnn}) is valid.

\subsubsection*{Constraint~(\ref{eq:RSOC1})}
This constraint is implied by $Y=yy^*$, which requires $Y$ to have rank one.

\subsection{Branching on a Complex Matrix Entry}  In this section we consider branching on upper and lower bounds of $Y$ as specified in $\mathcal{J}_C$, i.e., bounds on every complex matrix entry.  We examine branching rules for selecting a single $(i,j)$ entry of $Y$ to branch on.  One way to form a branching rule is to use a scoring function, where the branching option with the highest score is selected.  A score can be based on the violation of relaxed constraints, or an estimate of the impact of branching on the children nodes' optimal relaxation objective value.  In the standard development of the SDP relaxation the only relaxed constraint is the rank-one constraint on $Y$.  However, the rank function is discrete and applies globally to all variables of the decision matrix, which seems problematic for use in variable branching.  Therefore, we consider an alternative to the rank-one condition:

\begin{proposition}
\label{prop:prinrank}
For $n>1$ a nonzero Hermitian positive semidefinite $n\times n$ matrix $Y$ has rank one iff all of its $2\times 2$ principal minors are zero.
\end{proposition}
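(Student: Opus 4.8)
The plan is to prove the two implications separately. For the forward direction, if $Y$ is a nonzero Hermitian positive semidefinite matrix of rank one, then its spectral decomposition has exactly one nonzero eigenvalue, so $Y = yy^*$ for some nonzero $y \in \mathbb{C}^n$; for any $i \neq j$ the $2 \times 2$ principal submatrix of $Y$ on $\{i,j\}$ has diagonal entries $|y_i|^2$, $|y_j|^2$ and off-diagonal entry $y_i \bar y_j$, so its determinant is $|y_i|^2 |y_j|^2 - |y_i \bar y_j|^2 = 0$. This direction is immediate.

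For the converse, suppose $Y \neq 0$ is Hermitian positive semidefinite and every $2 \times 2$ principal minor vanishes. I want to construct a vector $y$ with $Y = yy^*$, which then forces $\mathrm{rank}(Y) = 1$ since $Y \neq 0$. First I would observe that $Y$ has a strictly positive diagonal entry: if $Y_{kk} = 0$, then for each $j$ the $2 \times 2$ principal submatrix on $\{k,j\}$ is positive semidefinite with a zero diagonal entry, which forces $|Y_{kj}|^2 \le 0$ and hence $Y_{kj} = 0$; so if every diagonal entry were zero, $Y$ itself would be the zero matrix. Fix an index $k$ with $Y_{kk} > 0$.

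The main step is a Schur-complement computation. Let $\hat Y$ be the principal submatrix of $Y$ obtained by deleting row and column $k$, and let $v$ be the vector with entries $v_i = Y_{ik}$ for $i \neq k$. Since $Y_{kk} > 0$, positive semidefiniteness of $Y$ is equivalent to $S := \hat Y - \frac{1}{Y_{kk}} v v^* \succeq 0$. The $(i,i)$ entry of $S$ equals $Y_{ii} - |Y_{ik}|^2 / Y_{kk} = (Y_{ii}Y_{kk} - |Y_{ik}|^2)/Y_{kk}$, which is precisely the vanishing $2 \times 2$ principal minor on $\{i,k\}$, hence zero. But a positive semidefinite matrix with an all-zero diagonal must be the zero matrix (by the same $2 \times 2$ argument used above), so $S = 0$, i.e.\ $\hat Y = \frac{1}{Y_{kk}} v v^*$. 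Setting $y_k := \sqrt{Y_{kk}}$ and $y_i := Y_{ik}/\sqrt{Y_{kk}}$ for $i \neq k$, a routine check of the $(k,k)$ entry, the $k$-th row and column, and the remaining block (the last using $\hat Y = \frac{1}{Y_{kk}} v v^*$) shows $Y = yy^*$, as desired.

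The only places requiring care are the two auxiliary observations — that $Y$ has a positive diagonal entry, and that a positive semidefinite matrix with vanishing diagonal is the zero matrix — after which the Schur complement collapses everything, with no phase- or rank-bookkeeping needed. Alternatively, one could invoke the standard fact that the rank of a Hermitian positive semidefinite matrix equals the order of its largest nonvanishing principal minor, which gives $\mathrm{rank}(Y) = 1$ at once; the Schur-complement route is preferable because it is self-contained. I do not expect a genuine obstacle.
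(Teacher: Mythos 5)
Your proof is correct, but it takes a genuinely different route from the paper's. The paper argues both directions through the contrapositive using classical facts about principal minors: if $\mathrm{rank}(Y)=r>1$, it invokes the fact that a Hermitian matrix has a nonvanishing $r\times r$ principal minor, notes that the corresponding submatrix is then positive definite and hence has a strictly positive $2\times 2$ principal minor; conversely a strictly positive $2\times 2$ principal minor gives a rank-two principal submatrix, so $r>1$. That argument is three lines long but leans on the (true, though not entirely trivial) characterization of the rank of a Hermitian matrix via its largest nonvanishing principal minor — the ``standard fact'' you mention as an alternative at the end. Your Schur-complement argument is longer but self-contained: it needs only that a PSD matrix with a zero diagonal entry has the corresponding row and column zero, and it produces an explicit factorization $Y=yy^*$ rather than just the rank statement. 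That extra constructive content is actually in the spirit of how the paper later uses rank-one principal submatrices (e.g.\ the completion argument in its appendix), so nothing is lost; the trade-off is purely brevity versus self-containedness.
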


\begin{proof}
Suppose $Y$ has rank $r > 1$.  Since $Y$ is Hermitian it has an $r\times r$ nonzero principal minor. Since $Y$ is positive semidefinite this principal minor corresponds to a positive definite $r\times r$ submatrix.  As $r\geq 2$, this implies there exists a $2\times 2$ strictly positive principal minor.   Now suppose instead that $Y$ has a strictly positive $2\times 2$  principal minor. Then $Y$ contains a rank-two principal submatrix and thus $r>1$.
\end{proof}
We will use the equivalent condition that the minimum eigenvalue of each $2\times 2$ principal submatrix of $Y$ be zero.  Recall that by the Hermitian property of $Y$ we have for any $i,j$ that $W_{ij}=W_{ji},T_{ij}=-T_{ji}$ and thus $T_{ii}=0$. Algebraically the minimum eigenvalue condition can be expressed as
\[\lambda_{\min} = \frac{1}{2} \bigg ( W_{ii} + W_{jj} - \|(W_{ii}-W_{jj},2W_{ij},2T_{ij})\| \bigg ). \]

We branch by partitioning the range $[L_{ij}, U_{ij}]$ for some $(i,j)$ via updating the bounds as:
\[L_{ij}' \leftarrow \alpha L_{ij}+(1-\alpha)U_{ij}\]
\[U_{ij}' \leftarrow \alpha L_{ij}+(1-\alpha)U_{ij}\]

\noindent
where $\alpha \in (0,1)$ is a parameter.  In our implementation we use the bisection rule, i.e., set $\alpha = 0.5$. We will refer to the assignment $L_{ij}' \leftarrow \frac{L_{ij}+U_{ij}}{2}$ as the up branch with respect to a matrix entry, and similarly down branch refers to the assignment on the upper bound.  Now let us consider rules for selecting an entry to branch on.

\subsubsection*{Most Violated with Strong Branching (MVSB)}
Let $c$ be a pair of indices $(i,j)$ for $i\ neq j$. From \Cref{prop:prinrank} it follows that for CSDP $\mbox{rank}(Y)\in \{0,1\}$ iff $\lambda_{\min} (Y_c )=0$ for all pairs. Select the $2\times 2$ principal submatrix of $Y$ with the greatest minimum eigenvalue, and let $c^*$ be the indices of this submatrix.  Given $c^*$, there are three possible candidate entries.  These are evaluated by strong branching: for each entry we will solve the up branch problem and obtain the solution matrix $Y^+$, and likewise $Y^-$ from the down branch. The following score function is used:
 \[\mu\max\{-\lambda_{\min}(Y_{c^*}^+),-\lambda_{\min}(Y_{c^*}^-)\}+(1-\mu)\min\{-\lambda_{\min}(Y_{c^*}^+),-\lambda_{\min}(Y_{c^*}^-)\}, \]

\noindent
where $\mu \in [0,1]$ is a tuning parameter; we follow the example of COUENNE  \cite{belotti2013mixed} and use a value of 0.15.  The entry with the highest score is selected for branching.

\subsubsection*{Most Violated with Worst-Case Bounds (MVWB)}
Since strong branching is computationally expensive, we consider solving a simpler subproblem to produce a score.  Consider the Worst-Case Eigenvalue (WEV) problem of finding the greatest minimum eigenvalue that can be obtained within $\mbox{conv}(\mathcal{J}_C)$:
\[\bold{(WEV)} \ \max \lambda \
\mbox{s.t.: \ } \|(W_{ii}-W_{jj},2W_{ij},2T_{ij})\|\leq W_{ii} + W_{jj} - 2\lambda, \
(\ref{eq:RSOC3a})-(\ref{eq:RSOC4}), (\ref{eq:vi1})-(\ref{eq:vi2}). \]

Note that we dropped the positive semidefinite condition on $X$ since the objective maximizes the minimum eigenvalue. We solve WEV in lieu of solving the children problems $Y^+$ and $Y^-$. Thus, overestimates of $\lambda_{\min}(Y_{c^*}^+)$ and $\lambda_{\min}(Y_{c^*}^-)$ are used in the score function. MVWB is otherwise the same as MVSB.

\subsubsection*{Reliability Branching with Entry Bounds (RBEB)}
Since MVSB and MVWB rely on a particular violation metric, for benchmarking purposes we consider a method that is agnostic to the measure of violation.  RBEB is an application of the \textit{rb-int-br} rule of Belotti et al. \cite{belotti2013mixed}.

Reliability branching uses pseudocosts, which capture information about previous branching decisions.  Let $\Phi^+$ and $\Phi^-$ be matrices containing pseudocosts, estimating the improvement in objective value by branching up or down, respectively.  If, at search tree node $k$, $L^k_{ij}$ is selected for branching up and the objective improves by $\delta^k$ in the child node's relaxation, then let $D^k:=\delta^k/(\frac{U^k_{ij}-L^k_{ij}}{2}-L^k_{ij})$ be the per-unit improvement.  $\Phi^+$ is the running average of all $D^k$ for up branches, and $\Phi^-$ is the running average for down branches.

For a given candidate $(i,j)$, the following score function is used:

 \[\left(\mu\max\{\Phi^+_{ij},\Phi^-_{ij}\}+(1-\mu)\min\{\Phi^+_{ij},\Phi^-_{ij}\}\right)\left(\frac{U_{ij}-L_{ij}}{2} \right).\]

As with MVSB and MVWB, we use a value of $\mu=0.15$.  The candidate with the highest score is selected for branching.  Note that we can restrict the set of candidate entries to those with violation, i.e. corresponding to members of $2\times 2$ principal submatrices with strictly positive minimum eigenvalue.

In reliability branching, strong branching is used in lieu of pseudocosts until $\eta$ evaluations have been performed on a given up or down branch; $\eta$ is called the reliability parameter  \cite{achterberg2005branching}.  In computational experiments we report results for $\eta=1$; $\eta=4$ gave similar but poorer performance due to the computational burden of strong branching on large instances.

\subsection{Bound Tightening}
It is well recognized that reducing the domain of a problem by tightening the bounds of the variables can improve the performance of branching methods substantially.
In this subsection we describe two fast procedures to tighten the bounds $L_{ij},U_{ij}$. In a companion applications paper \cite{cao2015acopf}, we apply these procedures along with others that exploit the special structure of the ACOPF problem. 

\subsubsection*{Tightening with a quadratic inequality}

Let us consider the following two variable problem:
\begin{alignat*}{2}
aq^2+qy+c & \leq 0,\\
\ell_y \leq y & \leq u_y.
\end{alignat*}

\noindent
Here $a$ and $c$ are parameters, and $q$ and $y$ are real-valued variables.  Given variable bounds on $y$, we would like to infer variable bounds on $q$.  With appropriate transformations, a variety of quadratic constraints of CQCQP can be put in the simple form above. For instance, one can convert a complex quadratic inequality into a real quadratic inequality in terms of the real and imaginary variable components.  Now any real quadratic constraint $q^TDq+b^Tq+c\leq 0$ may be written in the form $d_{ii}q_i^2 + q_i\sum_{j\neq i}d_{ij}q_j + \sum_{j\neq i}d_{jj}q_j^2 + b^Tq + c \leq 0$.  If all variables are bounded, then $\sum_{j\neq i}d_{ij}q_j$ may be treated as a single bounded variable and $b^Tq + c$ can be fixed to its minimum value, which allows us to apply the proposed bound tightening structure. One can also apply the same principles to derive bounds on the magnitude of $x_i$, and thereby derive bounds for $W_{ii}$ (see \cite{cao2015acopf} for details).

By adding a slack variable $s$, let us write the two variable quadratic constraint as an equality:
\begin{alignat*}{2}
 aq^2+qy+c+s &  =0,\\
s  & \geq 0,\\
\ell_y \leq y & \leq u_y.
\end{alignat*}

\noindent
Solving the quadratic equation for $q$ we have:
\begin{equation}
\label{eq:quadroot}
q = \frac{-y \pm \sqrt{y^2-4ac-4as}}{2a}.
\end{equation}
From here, it is possible to find the maximum and minimum values of the right-hand side of equality~(\ref{eq:quadroot}) with respect to $y,s$, given their bounds.  We can thus infer lower and upper bounds on $q$; and if the lower bound is greater than the upper bound, then infeasibility can be inferred.

\subsubsection*{Tightening on cycles}
We can tighten the off-diagonal entry bounds, $L_{ij},U_{ij}, i \neq j$ based on the simple principle that the sum of differences around a cycle must equal to zero.  Denote the difference of some variables of $x$, $\delta_{ij}:= x_i - x_j$. Given some cycle of indices, say $\{1,2,3\}$, we have $\delta_{12}+\delta_{23}+\delta_{31}=0$.

To interpret the off-diagonal terms as difference of variables, it will be convenient to reformulate CQCQP in polar coordinates.  For any complex variable $x_i$, we may replace the real and imaginary components with the complex angle $\theta_i$ and the magnitude $|x_i|$, where $\mbox{Re}(x_i) = |x_i|\cos(\theta_i), \mbox{Im}(x_i) = |x_i|\sin(\theta_i)$.  Then we have:
\begin{alignat*}{2}
W_{ij} & = \mbox{Re}(x_i x_j) = |x_i||x_j|\cos(\theta_i-\theta_j),\\
T_{ij} & =\mbox{Im}(x_i x_j) = |x_i||x_j|\sin(\theta_i-\theta_j).
\end{alignat*}

\noindent
Therefore, constraint~(\ref{eq:RSOC4}) implies the following:
\begin{alignat*}{2}
&L_{ij} W_{ij} \leq T_{ij} \leq  U_{ij}W_{ij}\\
\implies & L_{ij}|x_i||x_j|\cos(\theta_i-\theta_j) \leq |x_i||x_j|\sin(\theta_i-\theta_j) \leq U_{ij}|x_i||x_j|\cos(\theta_i-\theta_j)\\
\implies & L_{ij} \leq \tan(\theta_i-\theta_j) \leq U_{ij}.\\
\end{alignat*}

The last implication is due to the implicit nonnegativity of $W_{ij}$ per constraint~(\ref{eq:RSOC4}).  Using the fact that the arctangent is an increasing function, we  can now apply the cycle rule to infer new bounds by fixing all but one variable at variable bounds. For instance, for the cycle $\{1,2,3\}$ we can see if $L_{12}$ can be tightened:
\begin{alignat*}{2}
&\theta_1-\theta_2+\theta_2-\theta_3+\theta_3-\theta_1=0\\
\implies& \arctan(L_{12}) +\arctan(U_{23})+ \arctan(U_{31}) \geq 0.
\end{alignat*}

For instance, if $U_{23}=0.25, U_{31}= 0.5$, then $L_{12} \geq -\tan(\arctan(0.5)+\arctan(0.25))=-\frac{6}{7}$.

\section{Computational Results and Analysis}
\label{sec:experiments}
In this section we present the results of experiments on solving ACOPF and BoxQP problems using the spacial branch-and-cut approach described in the previous section. The experiments test the effect of using different relaxations --- standard SDP, SDP+RLT inequalities~\eqref{eq:RLT1}--\eqref{eq:RLT4}, SDP+complex valid inequalities~(\ref{eq:vi1})--(\ref{eq:vi2})) --- as well as the proposed branching rules MVSB, MVWB and the benchmark reliability rule RBEB on both RQCQP and CQCQP instances.  For ACOPF RLT inequalities were generated with the same complex-to-reals transformation as used for inequalities~(\ref{eq:real-rlt-first})--(\ref{eq:real-rlt-last}). Results are summarized in this paper, and instance-specific data is provided at \url{https://sites.google.com/site/cchenresearch/}.

All experiments herein are conducted with a 3.2 GHz quad-core Intel i5-4460 CPU processor and 8 GB main memory. Algorithms are implemented using MATLAB \cite{guide1998mathworks} with model processing performed by YALMIP \cite{lofberg2004yalmip}. Conic programs are solved with MOSEK version 7.1 \cite{andersen2000mosek}. IPOPT version 3.11.1 \cite{wachter2006implementation} is used as a local solver to obtain primal feasible solutions to CQCQP at each search tree node.

All spatial branch-and-cut (SBC) configurations are implemented with a depth-first search node selection rule.  The search termination criteria are: an explored nodes limit of 10000, a time limit of 1.5 hours, and a relative optimality gap limit. A search tree depth of 100 is applied, pruning all children nodes past this limit.  The optimality gap is calculated using the global upper bound (\texttt{gub}) and global lower bound $(\texttt{glb}): \texttt{gap} = 1-(\texttt{gub}-\texttt{glb})/|\texttt{gub}|$.

\subsection{Problem Instances}

\subsubsection*{ACOPF}
The ACOPF problem is a power generation scheduling problem that can be formulated as CQCQP. The problem formulation can be found in \Cref{sec:acopf}; for a thorough treatment on optimization issues related to optimal power flow we refer the reader to Bienstock \cite{bienstock2016electrical}.

Our experiments include the test cases of Gopalakrishnan et al. \cite{gopalakrishnan2012global}. Small duality gaps were reported for these cases, so the root relaxation is known to provide a good lower bound.   These instances are named g9, g14, g30, and g57, where the number indicates the number of buses in the problem. We include the modified IEEE test cases from Chen et al. \cite{cao2015acopf}, which are named 9Na, 9Nb, 14S, 14P, and 118IN. We also use the NESTA instances \cite{coffrin2014nesta} listed in \Cref{tab:nestsumm}. Here \texttt{SDP gap} indicates the percentage gap between the standard SDP relaxation and the best known upper bound. NESTA instances with trivial gap ($<0.1\%$) or with more than 1000 buses are excluded.  The latter criterion excludes two large instances that are too challenging for the SDP solver to handle even at the root node.
%Ongoing work in the community may be able to overcome this hurdle soon (see e.g. \cite{kocuk2015strong,josz2015moment}).

Since the standard IEEE test cases do not include phase angle differences, we have applied a 30 degree bound across all connected buses if not otherwise specified. We also use a sparse formulation of CSDP that replaces the PSD constraint with multiple positive semidefinite constraints on submatrices and linear equality constraints.  Fukuda et al. \cite{fukuda2001exploiting}  developed the methodology for sparse formulation of a generic SDP, and several authors \cite{molzahnimplementation_2013,jabr2012exploiting,bai2011chordal,bienstock2015lp} have studied its effects in improving solution times for ACOPF.  We follow this methodology, using a minimum-degree ordering heuristic on the admittance matrix and finding a corresponding symbolic Cholesky decomposition in order to determine a suitable clique decomposition.  In \Cref{sec:suff} we show that enforcing the rank constraint on each submatrix is sufficient to ensure equivalence between CQCQP and the sparse version of CSDP.  Hence valid inequalities and branching rules are applied only to $2\times 2$ principal minors that have been kept after sparse decomposition.

All instances were solved using an optimality criterion of $0.1\%$. Bound tightening is activated on all instances (see \cite{cao2015acopf} for its effects).

\begin{table}[]
\centering
\caption{NESTA instances.}
\label{tab:nestsumm}
\begin{tabular}{cc}
\hline
\hline
\texttt{name} & \texttt{SDP gap $(\%)$} \\
\hline
case3\_lmbd & 0.39 \\
case3\_lmbd\_\_api & 1.26 \\
case3\_lmbd\_\_sad & 2.06 \\
case5\_pjm & 5.22 \\
case24\_ieee\_rts\_\_api & 1.45 \\
case24\_ieee\_rts\_\_sad & 6.05 \\
case29\_edin\_\_sad & 28.44 \\
case73\_ieee\_rts\_\_api & 4.1 \\
case73\_ieee\_rts\_\_sad & 4.29 \\
case30\_as\_\_sad & 0.47 \\
case30\_fsr\_\_api & 11.06 \\
case89\_pegase\_\_api & 18.11 \\
case118\_ieee\_\_api & 31.5 \\
case118\_ieee\_\_sad & 7.57 \\
case162\_ieee\_dtc & 1.08 \\
case162\_ieee\_dtc\_\_api & 0.85 \\
case162\_ieee\_dtc\_\_sad & 3.65 \\
case189\_edin\_\_sad & 1.2\\
\hline
\hline
\end{tabular}
\end{table}

\subsubsection*{BoxQP}
 The BoxQP problem is formulated as
%\[(\bold{BoxQP}):
$\ \min \frac{1}{2}x'Qx + f'x: 0 \leq x\leq 1,$
%\\
%subject to
%\begin{alignat*}{2}
%0 \leq x\leq 1,\\
%\end{alignat*}
where $x\in \mathbb{R}^n$ is decision vector, and $f \in \mathbb{R}^n,Q\in \mathbb{R}^{n\times n}$ are data. We use the 90 BoxQP instances of Burer and Vandenbussche \cite{burer2009globally}. The instances are named \emph{sparAAA-BBB-C}, where \emph{AAA} is the dimension of $x$, \emph{BBB} is the density of $Q$, and \emph{C} is the random seed number.  We set an optimality gap limit of $0.01\%$ for these instances.  The relaxation used for these instances is the SDP relaxation strengthened with RLT inequalities.

\subsection{Results}

\Cref{tab:opfsumm} shows average the performance of different convex relaxations and branching rules for the ACOPF; the averages are taken over instances solved by a given configuration.  The first column \texttt{relax} shows the relaxation used to solve the problem: CVI refers to CSDP together with complex valid inequalities~(\ref{eq:vi1})-(\ref{eq:vi2}); RLT refers to the SDP relaxation of the real QCQP formulation together with RLT inequalities; and SDP refers the standard SDP relaxation without additional valid inequalities. The column \texttt{rule} shows the branching rule used, \texttt{nodes} the number of search tree nodes explored before termination,  \texttt{depth}  the maximum search tree depth, \texttt{lbtime} the time in seconds spent solving relaxations, \texttt{ubtime} the time in seconds spent obtaining primal solutions, \texttt{time} the total time spent in seconds by the SBC algorithm, and finally \texttt{solved} the total proportion of solved instances.

The formulation CVI with the complex valid inequalities~(\ref{eq:vi1})-(\ref{eq:vi2}) leads to the best performance. The new branching rules MVWB and MVSB designed for rank-one constraints perform significantly better compared to the benchmark reliability rule RBEB.
Without using the complex valid inequalities only two of the 26 instances are solved with the RLT formulation and MVWB branching. The plain SDP formulation without using any cuts do not converge for any of the instances.
Only the best rule MVWB is presented for relaxations RLT and SDP for brevity; similar or worse results are obtained with MVSB and RBEB.

For a more detailed view, the computational results are also summarized with performance profiles \cite{petriu2002applying}; we use the standard $\log 2$ base on the $x$ axis. The graph represents the proportion of instances for which a given configuration was within $x$ times the best configuration. For instance, for a profile of time to solve, a point $(x,y)=(2,0.5)$ indicates that for $50\%$ of cases (the $y$ axis) a spatial branch-and-cut configuration achieves at most two times the best time (the $x$ axis) among all configurations tested.

For ACOPF we present performance profiles for time to solve in \Cref{fig:opftime} and total search tree nodes in \Cref{fig:opfnodes}.  Using CVI, MVWB shows consistently better results over the other branching rules, and both MVWB and MVSB compare favourably to RBEB.  MVSB used smaller search tree when it converged, although MVWB resulted in more convergent instances.  Both violation strategies tend to produce smaller search trees compared to the reliability rule.  The complex relaxation CVI produces substantially better results than the RQCQP approach represented by RLT. The addition of the valid inequalities led to faster convergence, whereas with a plain SDP relaxation no practical convergence is observed. These results clearly demonstrate the benefits of exploiting the structure in the complex formulation. The stronger complex inequalities and the new branching rules based on the violation of the rank-one constraints lead to a faster convergence.

\begin{table}[h!]
\centering
\caption{ACOPF performance summary.}
\label{tab:opfsumm}
\begin{tabular}{cc|ccccccc}
\hline
\hline
\texttt{relax} & \texttt{rule}& \texttt{nodes} & \texttt{depth} & \texttt{lbtime} & \texttt{ubtime} & \texttt{time} &  \texttt{solved} \\
\hline
CVI&MVWB & 1459.5 & 33.5 & 107.4 & 169.5 & 382.1 & 15/26 \\
CVI&MVSB & 804.6 & 21.3 & 128 & 149 & 734.3 & 12/26 \\
CVI&RBEB & 4450.8 & 62.4 & 555.9 & 1140.7 & 2077.5 & 7/26\\
RLT&MVWB & 94 & 17.5 & 2 & 6 & 9 & 2/26\\
SDP&MVWB & - & - & - & - & - & 0/26\\
\hline
\hline
\end{tabular}
\end{table}

\begin{figure}[h!]
  \centering
    \includegraphics[width=0.9\textwidth]{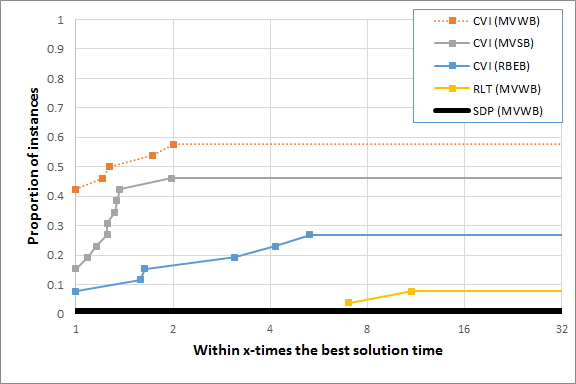}
      \caption{Performance profile of solve times using different relaxations and branching rules for ACOPF.}
\label{fig:opftime}
\end{figure}
\begin{figure}[h!]
  \centering
    \includegraphics[width=0.9\textwidth]{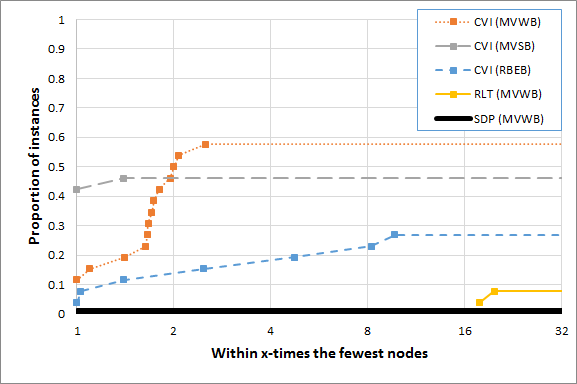}
      \caption{Performance profile of search tree nodes using different relaxations and branching rules for ACOPF.}
\label{fig:opfnodes}
\end{figure}

%\texttt{rule} is the branching rule used, \texttt{nodes} are the number of search tree nodes explored before termination,  \texttt{depth} is the maximum search tree depth, \texttt{lbtime} is the time in seconds spent solving relaxations, \texttt{ubtime} the time in seconds spent obtaining primal solutions, \texttt{time} is the total time spent in seconds by the SBC algorithm, and \texttt{solved} indicates the total proportion of solved instances.

\Cref{tab:bqpsumm} shows a comparison of the branching rules for BoxQP. For BoxQP the lower bound times are substantially higher than the upper bound times due to the quadratic increase in variables in the lifted relaxation. The proportion of lower and upper bound solve times to total time indicates the overhead cost of strong branching: RBEB ($16\%$) MVSB ($27\%$), MVWB ($>99\%$). With the new branching rules over 80 of the 90 instances are solved; whereas with the reliability rule RBEB 72 instances are solved.

The performance profiles provide a more detailed view on the BoxQP results.  \Cref{fig:qptime} shows that MVWB was the best method in terms of solution times, and both MVWB and MVSB performed substantially better than the reliability rule RBEB.  \Cref{fig:qpnodes} indicates that MVSB tends to produce the smallest search trees, demonstrating the power of the strong branching. Both MVWB and MVSB lead to significantly smaller search trees compared to the reliability rule RBEB.
The results on BoxQP instances demonstrate that even for real QCQPs the branching rules MVWB and MVSB exploiting the rank-one constraint can be very effective.  The results are competitive compared to published results of Misener et al. \cite{misener2015dynamically}, which report results from global solvers GloMIQO \cite{misener2013glomiqo}, BARON \cite{sahinidis1996baron}, and Couenne \cite{belotti2009branching} are compared. Burer and Vandenbussche \cite{burer2009globally} developed an exact SDP-based branch-and-bound method for nonconvex quadratic programming problems based on KKT conditions.  Substantial improvements in performance based on completely positive programming are reported by Burer \cite{burer2010optimizing} and Burer and Chen \cite{chen2012globally}.

\begin{table}[]
\centering
\caption{BoxQP performance summary.}
\label{tab:bqpsumm}
\begin{tabular}{c|ccccccc}
\hline
\hline
\texttt{rule}& \texttt{nodes} & \texttt{depth} & \texttt{lbtime} & \texttt{ubtime} & \texttt{time} &  \texttt{solved} \\
\hline
MVWB & 24.4 & 3.7 & 382.0 & 4.1 & 386.3 & 81/90 \\
MVSB & 9.9 & 2.7 & 125.1 & 1.4 & 467.6 & 80/90 \\
RBEB & 26.8 & 10.1 & 173.1 & 4.3 & 1121.1 & 72/90\\
\hline
\hline
\end{tabular}
\end{table}
\begin{figure}[h!]
  \centering
    \includegraphics[width=0.9\textwidth]{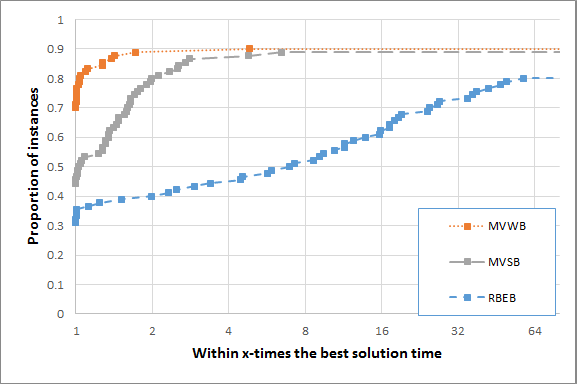}
      \caption{Performance profile of solve times for different branching rules on BoxQP.}
\label{fig:qptime}
\end{figure}
\begin{figure}[h!]
  \centering
    \includegraphics[width=0.9\textwidth]{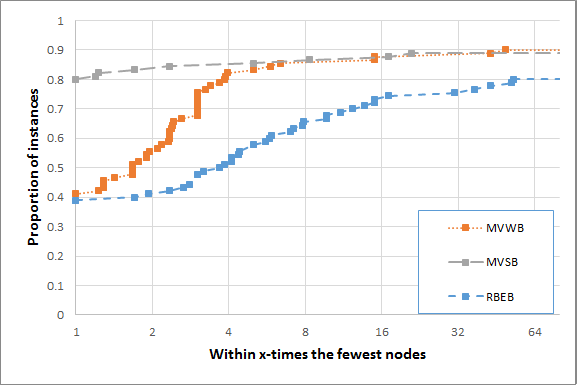}
      \caption{Performance profile of search tree nodes with respect to branching rules on BoxQP.}
\label{fig:qpnodes}
\end{figure}

\section{Conclusion}
\label{sec:conclusion}
We developed a spatial branch-and-cut method for generic Quadratically-Constrained Quadratic Programs with bounded complex variables. We derived valid inequalities from the convex hull description of nonconvex rank-one restricted sets to strengthen the SDP relaxations. We gave a new branching method based on an alternative characterization of a rank-one constraint. Experiments on Alternating Current Optimal Power Flow problems show the valid inequalities derived from the complex formulation are critical for improving the performance of the algorithm.  The proposed branching methods based on the rank-one constraint resulted in better performance compared to the benchmark reliability branching method.  Tests on box-constrained nonconvex Quadratic Programming instances suggest that the violation-based branching methods may also be effective for problems with real variables.

\appendix

\section{Sufficiency of Sparse Valid Inequalities}
\label{sec:suff}
Consider a Hermitian matrix $X$ with spectral decomposition $X=\sum_{k=1}^N \lambda_k d_kd_k^*$, where the eigenvalues are ordered so that $\lambda_k\geq \lambda_{k+1}$. Unlike in the real symmetric case, if $\lambda_k$ has multiplicity 1, then the eigenvector $d_k$ is only unique up to rotation by a complex phase $e^{j\theta_k}$ \cite[pp. 41]{terence2012topics}.  In polar coordinates (as used commonly in ACOPF) we have that the eigenvector is unique up to scaling of all phase angles by the same degree (preserving angle differences).  That is, if $d_{ki} = |d_{ki}|(\cos(\theta_{ki})+j\sin(\theta_{ki}))$, then we can add $\delta\in \mathbb{R}$ to all angles and replace $d_k$ in the eigenbasis.   In terms of rectangular coordinates (i.e. real and imaginary components), we can state that $d_k$ can be replaced in the eigenbasis with
\[\mbox{Re}(d_k) +\delta \vec{1} + j (\delta^I-\mbox{Im}(d_k)),\]
for any $\delta \in \mathbb{R}$ such that $(\mbox{Re}(d_{ki})+\delta)^2\leq |d_{ki}|^2 \forall i$ and $\delta^I \in \mathbb{R}^N$ with entries $\delta^I_i = \sqrt{|d_{ki}|^2-(\delta+\mbox{Re}(d_{ki}))^2}$.

Sparse positive semidefinite decomposition of a Hermitian matrix $X\in \mathbb{H}^{N\times N}$ yields a set of index sets $C$, where $X_c\succeq 0  \ \forall c\in C \iff X\succeq 0$.  Note that $\bigcap_{\forall c \in C} c = \{1,...,N\}$. A property of sparse decomposition is that $C$ can be represented with an acyclic graph where each node is an element $c$ of $C$ and an edge between two nodes indicates that at least one index is shared between the corresponding index sets; this is known as a clique tree  \cite{grone1984positive,fukuda2001exploiting}.  The clique tree of a chordal graph can be constructed in time and space linear with respect to the number of edges  \cite{blair1993introduction}.  A matrix can be completed for a certain property if there exist values for the entries not specified by the clique tree such that the fully specified matrix can attain the property.

\begin{proposition}
$X_c \succeq 0, \mbox{rank}(X_c)\leq 1 \ \forall c \in C$ iff $X$ can be completed so that $X=xx^*$ for some $x\in \mathbb{C^N}$.
\end{proposition}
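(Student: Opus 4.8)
The plan is to settle the ``if'' direction by inspection and to establish the ``only if'' direction by a constructive phase-synchronization argument along the clique tree; this is the rank-one specialization of the classical chordal positive semidefinite completion theorem.

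For the ``if'' direction, suppose $X$ has been completed so that $X = xx^*$ for some $x \in \mathbb{C}^N$. Then for every $c \in C$ the now fully specified principal submatrix satisfies $X_c = x_c x_c^*$, where $x_c$ denotes the restriction of $x$ to the indices in $c$; in particular $X_c \succeq 0$ and $\mathrm{rank}(X_c) \le 1$, as required. This uses nothing beyond the definition of a principal submatrix.

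For the ``only if'' direction, I would first record that a Hermitian positive semidefinite matrix of rank at most one is of the form $vv^*$, with the factor $v$ unique up to multiplication by a unit-modulus complex scalar --- the phase ambiguity already discussed in the paragraph preceding the proposition. Fix, for each $c \in C$, a factorization $X_c = x_c x_c^*$ with $x_c \in \mathbb{C}^{|c|}$. For tree-adjacent cliques $c, c'$ with $S := c \cap c' \neq \emptyset$, the restrictions $(x_c)_S (x_c)_S^*$ and $(x_{c'})_S (x_{c'})_S^*$ both equal the specified submatrix $X_S$, so one of two things holds: if $X_S = 0$, then positive semidefiniteness of $X_c$ forces the diagonal entries, hence the whole $S$-block, of $x_c$ to vanish, and likewise for $x_{c'}$; if $X_S \neq 0$, there is a unique phase $e^{\I\phi}$ with $(x_{c'})_S = e^{\I\phi}(x_c)_S$. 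Rooting the clique tree arbitrarily, I would process the cliques top-down and rescale each $x_c$ by a unit-modulus scalar so that it agrees with its already-processed parent on their shared indices --- no rescaling being needed when that shared block vanishes. Because the tree is acyclic, each clique carries exactly one such constraint, so the synchronization is never over-determined.

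After synchronization, any two tree-adjacent cliques agree on all of their shared indices by construction; invoking the running-intersection property of clique trees (any index common to $c$ and $c'$ lies in every clique on the unique $c$--$c'$ path), it follows that \emph{any} two cliques agree on shared indices. Hence $x \in \mathbb{C}^N$ defined by $x_v := (x_c)_v$ for an arbitrary $c \ni v$ is well defined, and filling the unspecified entries of $X$ according to $xx^*$ produces a completion equal to $xx^*$: for any specified entry $(i,j)$, i.e.\ with $i, j \in c$ for some $c$, we have $x_i \overline{x_j} = (x_c)_i \overline{(x_c)_j} = (X_c)_{ij} = X_{ij}$. The step I expect to be the crux is the phase bookkeeping --- verifying that the per-clique rank-one factors can be reconciled without contradiction (this is exactly where acyclicity enters, and is the analogue of why such synchronization can fail on a cyclic sparsity pattern) and that cliques whose shared block is zero neither introduce nor obstruct consistency; once these are in hand the rest is routine.
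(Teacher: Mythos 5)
Your proposal is correct and follows essentially the same route as the paper: both arguments factor each clique submatrix as a rank-one outer product and then synchronize the unit-modulus phase ambiguities by walking the clique tree, with acyclicity guaranteeing consistency. Your version is in fact slightly more careful than the paper's — you explicitly handle the case of a vanishing shared block and invoke the running-intersection property to cover non-adjacent cliques, whereas the paper matches phases on a single shared entry and leaves those points implicit.
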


\begin{proof}
If $X=xx^*$, then $\mbox{rank}(X)\leq 1, X\succeq 0$ and so one direction is obvious.  Now consider the other direction: suppose that $X_c \succeq 0, \mbox{rank}(X_c)\leq 1 \ \forall c \in C$.  We will use a constructive proof, i.e. we shall construct an $x$ so that $(xx^*)_c = X_c \forall c \in C$.

Let us consider the clique tree corresponding to the chordal graph formed by $C$.  Label a terminal node $c_1$.  Since $X_{c_1}$ has rank one and is positive semidefinite, we have that $X_{c_1} = \lambda_1^{c_1} (d_1^{c_1})^*d_1^{c_1}$, and so we can set $x_{c_1}= \sqrt{\lambda_1^{c_1}}d_1^{c_1}$ for any normalized principal eigenvector $d_1^{c_1}$. Now denote a neighbouring node $c_2$ and consider its corresponding index set with some normalized principal eigenvector, $d_2^{c_2}$.  By clique tree property, $X_{c_1}$ and $X_{c_2}$ share at least one entry, say $X_{mm}$, so $|(d_{1}^{c_1})_m|=|(d_{2}^{c_2})_m|$.  Since eigenvectors of the eigenbasis are only unique up to rotation by complex phase, $d_{2}$ can be rotated to form $\hat d_{2}$ so that $(d_{1}^{c_1})_m=(\hat d_{2}^{c_2})_m$, i.e. rotating the eigenvector so that one entry attains a specific angle.  Then we can set $x_{c_2} = \sqrt{\lambda_1^{c_2}}\hat d_{2}$, where $x_{mm}$, the shared entry of $x_{c_1},x_{c_2}$ retains the same value. The remaining elements of $x$ can be found by proceeding through neighbours in the same manner, with the acyclic property ensuring that each element of $x$ is set once.
 \end{proof}

This relies on a generalization of the fact that in ACOPF and in load flow the bus angles of any solution can be scaled up or down by constants. From the proposition it immediately follows that in the alternative rank condition only the $2\times 2$ principal minors related to the submatrices $X_c$ need to be considered, and so valid inequalities~(\ref{eq:vi1}) and (\ref{eq:vi2}) can be applied in a sparse fashion.

\section{ACOPF Formulation}
\label{sec:acopf}
ACOPF can be written in the form of CQCQP  \cite{lavaei_zero_2012}; we state a lifted formulation of the ACOPF problem as follows:

\begin{comment}
\[\min  c_2'(P+P^D)^2 + c_1'(P+P^D)+c_0\]
\\
subject to
\begin{subequations}
\begin{alignat}{2}
   &P+jQ = \mbox{diag}(YVV^*) \label{eq:CREC1}\\
   & P^{\min}+jQ^{\min}\leq  P+P^D+j(Q+Q^d)\leq P^{\max}+jQ^{\max} \label{eq:CREC2}\\
   &V^{\min}.^2\leq  \mbox{diag}(VV^*)\leq V^{\max}.^2 \label{eq:CREC3}\\
   &\pm \mbox{Im}(V_{m}V_{n}^*)\leq  \tan(\theta^{\max})\mbox{Re} (V_{m}V_{n}^*) \label{eq:CREC4}\\
   &P^f + jQ^f = \mbox{diag}(C_fY_fVV^*) \label{eq:CREC5}\\
   &P^t + jQ^t = \mbox{diag}(C_tY_tVV^*) \label{eq:CREC6}\\
   &P^f.^2 + Q^f.^2\leq S^{\max}.^2 \label{eq:CREC7}\\
   &P^t.^2 + Q^t.^2\leq S^{\max}.^2 \label{eq:CREC8}
\end{alignat}
\end{subequations}

\[\min  c_2'[P+P^D]^2 + c_1'(P+P^D)+c_0\]
\\
subject to
\begin{alignat*}{2}
  &P+jQ = \mbox{diag}(YH)\\
   & P^{\min}+jQ^{\min}\leq  P+P^D+j(Q+Q^d)\leq P^{\max}+jQ^{\max} \label{eq:CREC2}\\
   &[V^{\min}]^2\leq  \mbox{diag}(H)\leq V^{\max}.^2\\
   &\tan(\theta^{\min})W_{ij}\leq T_{ij} \leq  \tan(\theta^{\max})W_{ij}\\
   &P^f + jQ^f = \mbox{diag}(C_fY_fH) \\
   &P^t + jQ^t = \mbox{diag}(C_tY_tH)\\
   &P^f.^2 + Q^f.^2\leq S^{\max}.^2 \\
   &P^t.^2 + Q^t.^2\leq S^{\max}.^2 \\
   &H \succeq 0\\
   &\text{rank}(H) = 1
\end{alignat*}

\end{comment}

%\[(\bold{LACOPF}) \ \ \ \min  c_2'[P+D_P]^2 + c_1'(P+D_P)+c_0\]
%\\
%subject to
\begin{subequations}
\begin{alignat}{2}
\min  \ \ & c_2'[P+D_P]^2 + c_1'(P+D_P)+c_0 \\
 \text{s.t.} \ \  &P = \mbox{diag}(GW-BT) \label{eq:LACOPF1}\\
   &Q = \mbox{diag}(-BW-GT) \label{eq:LACOPF2}\\
   & P^{\min}\leq  P+D_P\leq P^{\max} \label{eq:LACOPF3a}\\
   & Q^{\min}\leq  Q+D_Q\leq P^{\max} \label{eq:LACOPF3b}\\
   &[V^{\min}]^2\leq  \mbox{diag}(W)\leq [V^{\max}]^2 \label{eq:LACOPF4}\\
 (\bold{LACOPF}) \ \ \ \ \  &\tan(\theta^{\min})W_{ij} \leq T_{ij} \leq  \tan(\theta^{\max})W_{ij} \label{eq:LACOPF5}\\
   &P_f = \mbox{diag}(C_f(G_fW-B_fT)) \label{eq:LACOPF6}\\
   &Q_f = \mbox{diag}(C_f(-B_fW-G_fT)) \label{eq:LACOPF7}\\
   &P_t = \mbox{diag}(C_t(G_tW-B_tT)) \label{eq:LACOPF8}\\
   &Q_t = \mbox{diag}(C_t(-B_tW-G_tT)) \label{eq:LACOPF9}\\
   &[P_f]^2 + [Q_f]^2\leq [S^{\max}]^2 \label{eq:LACOPF10a}\\
   &[P_t]^2 + [Q_t]^2\leq [S^{\max}]^2 \label{eq:LACOPF10b}\\
   &W+\I T\succeq 0 \label{eq:LACOPF11}\\
&\mbox{rank}(W+\I T)=1 \label{eq:LACOPF12}
\end{alignat}
\end{subequations}

Let $n$ be the number of nodes in the graph of the problem, with nodes representing either buses or transformers, and let $k$ be the number of edges (aka branches).
\subsubsection*{Variables and Data}
The decision variables used in LACOPF are: nodal powers $P+jQ \in \mathbb{C}^n$; a Hermitian decision matrix representing the outer product of nodal voltages, $W+\I T \in \mathbb{H}^{n\times n}$; and power to and from buses (respectively) across branches, $P_f+jQ_f,P_t+jQ_t \in \mathbb{C}^k$.  All other parameters are fixed data: convex costs $c_0 \in \mathbb{R}, c_1 \in \mathbb{R}^N,c_2 \in \mathbb{R}_+^N$; load, $D_P+jD_Q \in \mathbb{C}^n$; admittance matrices, $Y \in \mathbb{C}^{n\times n}, \ Y_f,Y_t \in \mathbb{C}^{k\times n}$; voltage magnitude limits, $V^{\min},V^{\max} \in \mathbb{R}^n$; phase angle limits $\theta^{\min},\theta^{\max} \in \mathbb{R}^k$; generator limits, $P^{\min}+jQ^{\min},P^{\max}+jQ^{\max}\in \mathbb{C}^{n}$; and line limits, $S^{\max} \in \mathbb{R}_{++}^n$.   $Y:=G+jB$ is the bus admittance matrix, and $Y_f:=G_f+jB_f,Y_t:=G_t+jB_t$ are branch admittances corresponding to `from' and `to' nodes, respectively.  Admittance is composed of conductance $G$ and susceptance $B$.  For a branch $r$ from $m$ to $n$, the $(r,m)$ entry of $C_f \in \mathbb{R}^{k\times n}$ and the $(r,n)$ entry of $C_t \in \mathbb{R}^{k\times n}$  are 1; all unconnected entries in those matrices are 0  \cite{zimmerman2011matpower}.

\subsubsection*{Objective and Constraints}
The objective is to minimize the cost of real power generation, where $P+P^D$ is the net generation of real nodal power.  Constraints~(\ref{eq:LACOPF1}) and (\ref{eq:LACOPF2}) are the power flow equations, relating nodal power to nodal voltage. Constraints~(\ref{eq:LACOPF3a}) and (\ref{eq:LACOPF3b}) model demand and generation limits.  Constraint~(\ref{eq:LACOPF4}) bounds voltage magnitudes.   Constraint~(\ref{eq:LACOPF5}) bounds phase angle differences between buses.  Constraints~(\ref{eq:LACOPF6}) to (\ref{eq:LACOPF9}) are the branch power flow equations. Constraints~(\ref{eq:LACOPF10a}) and (\ref{eq:LACOPF10b}) constrain apparent power across lines.  We will also consider other types of line limits, but omit them here for brevity. Constraints~(\ref{eq:LACOPF11}) and (\ref{eq:LACOPF12}) ensure that $W+\I T$ can represent the outer product of nodal voltages  \cite{lavaei_zero_2012}.

\subsubsection*{SDP Relaxation}
In LACOPF only the rank constraint~(\ref{eq:LACOPF12}) is nonconvex, and dropping it gives a convex relaxation (RACOPF).  This primal relaxation approach was first applied to ACOPF by Bai et al. \cite{bai_semidefinite_2008} and the conic dual formulation was first considered by Lavaei and Low \cite{lavaei_zero_2012}.  Note that in ACOPF the bounds $L_{ij},U_{ij}, i\neq j$ are specified as angle bounds in polar coordinates, i.e. $L_{ij}=\tan(\theta^{\min}_{ij}),U_{ij}=\tan(\theta^{\max}_{ij})$.

\vskip 5mm
\noindent
\textbf{Acknowledgements.}
The authors would like to thank Dr. Richard P. O'Neill of the Federal Energy Regulatory Commission for the initial impetus to study conic relaxations of ACOPF, and for helpful comments in early drafts of the paper. This research has been supported, in part, by Federal Energy Regulatory Commission and by grant FA9550-10-1-0168 from the Office of Assistant Secretary Defense for Research and Engineering. Chen Chen was supported, in part, by a NSERC PGS-D fellowship.
%\end{acknowledgements}

\bibliographystyle{plain}
\bibliography{sbc}

\end{document}